\newtheorem{thm}{Theorem}[section]
\newtheorem{lem}[thm]{Lemma}
\newtheorem{obs}{Remark}[section]
\newcommand{\dee}{\mathop{\mathrm{d}\!}}
\DeclareMathOperator\erf{erf}
\newcommand\be{\begin{equation}}
\newcommand\ee{\end{equation}}
\newcommand{\bint}{\displaystyle\int}
\title{A class of moving boundary problems with an exponential source term}
\author{
Julieta Bollati $^{1}$, Ernesto A. Borrego Rodriguez $^{1}$, Adriana C. Briozzo $^{1}$, Colin Rogers $^{2}$\\
\small {{$^1$} Depto. Matem\'atica, FCE, Univ. Austral, Paraguay 1950-CONICET} \\
\small {S2000FZF Rosario, Argentina.}\\
\small {{$^2$} School of Mathematics and Statistics, The University of New South Wales},\\
\small {Sydney NSW 2052, Australia}\\
\small{Email: jbollati@austral.edu.ar, eaborrego93@gmail.com, abriozzo@austral.edu.ar, c.rogers@unsw.edu.au}
}
\date{}
\begin{document}

\maketitle
\abstract{This work investigates a class of moving boundary problems related to a nonlinear evolution equation featuring an exponential source term. We establish a connection to Stefan-type problems, for different boundary conditions at the fixed face, through the application of a reciprocal transformation alongside the Cole-Hopf transformation. For specific cases, we derive explicit similarity solutions in parametric form. This innovative approach enhances our understanding of the underlying dynamics and offers valuable insights into the behavior of these systems.}

\smallskip

\noindent\textbf{Keywords:} reciprocal transformation; Stefan problem; exponential source term, evolution nonlinear equation.

\smallskip

\noindent\textbf{AMS Classification}: 80A22, 35K05, 35R35.

\section{Introduction}

Stefan-type moving boundary problems naturally arise in the analysis of the melting of solids and the freezing of liquids ( q.v. \cite{Ru1971, Fr1982, ElOc1982, Cr1984, AlSo1993, Ta2000} and literature cited therein). The classical Stefan problem concerns the canonical linear heat equation and where the heat balance requirement on the moving boundary which separates the phases leads to a nonlinear boundary condition in the temperature.

Moving boundary value problems for the Bateman-Burgers equations \cite{Ba1915, Bu1948} linked to the classical heat equation and an integral version \cite{CaLi1989} of the standard Cole-Hopf transformation have been the subject of extensive investigations \cite{CaLi1994, AbLi2000, AbLi2000-2}. In \cite{Ro2015}, such integral representation was conjugated with a reciprocal transformation to derive parametric exact solution to cases of moving boundary problems which arise, in particular, in the context of the percolation of liquids through a porous medium such as soil. The procedure was shown therein to extend, importantly, to a wide class of moving boundary problems which incorporate heterogeneity in the porous medium.

Reciprocal transformations were originally introduced in connection with lift and drag phenomena in gas dynamic (Bateman \cite{Ba1938}). These constituted multi-parameter relations which leave invariant the conservation laws of the governing system, up to the gas laws. It was subsequently established in \cite{Ba1944} that these relations constitute particular B\"acklund transformations the later have their genesis in the classical geometry of pseudo-spherical surfaces. B\"acklund transformation have proved to have diverse important physical applications both in continuous mechanics and modern soliton theory \cite{RoSh1982, RoSc2002, Ro2022}. Invariance of canonical system in both steady and non-steady relativistic gas dynamic has recently been established under multi-parameter reciprocal transformation \cite{RoRu2020, RoRuSc2020}.

In soliton theory, reciprocal transformations of a type introduced in \cite{KiRo1982} have been applied not only to invariance properties \cite{RoNu1986} but also to link the canonical AKNS and WKI inverse scattering schemes \cite{RoWo1984}. Novel reciprocal links between solitonic systems in 2th-dimensions have been established in \cite{OeRo1993}.

Reciprocal transformations have been applied in \cite{Ro1985, Ro1986} to obtain exact solutions to Stefan-type moving boundary problems for a nonlinear heat equation which models conduction in a range of metals as delimited in \cite{St1951}. Such a reciprocal transformation was subsequently used in \cite{RoGu1988} to determine conditions for the onset of melting in such metals subject to applied boundary flux.

The application of reciprocal transformations to solve a range of Stefan-type moving boundary problems which model the percolation of liquids through porous media such as soils has been detailed in \cite{RoBr1988, Ro2019, Ro2019-2}.

In recent work \cite{BrRoTa2023}, application of a reciprocal transformation was made to solve a class of Stefan-type moving boundary problems for a canonical nonlinear evolution equation which incorporates a constant source term. In \cite{BrRo1993}, a class of boundary value problems for a nonlinear transport equation with addition of an exponential source term was shown to admit exact solution with a pair of stationary boundaries. This result was derived via a Lie-B\"acklund analysis and models liquid transport through a soil subject to a volumetric extraction process. In the present work, a class of moving boundary problems for the nonlinear equation of \cite{BrRo1993} involving as exponential source term is solved via a novel application of a reciprocal transformation.

This paper is organized as follows. In Section 2, we establish a connection between one-dimensional Stefan-type problems and a class of moving boundary problems governed by a nonlinear evolution equation that includes an exponential source term. We first examine a Stefan-type problem with a Dirichlet condition applied at the fixed face  $z=0$, followed by an analysis of cases where either a Robin or Neumann condition is imposed on the fixed face. Additionally, we investigate the convergence of the Stefan problem with a convective boundary condition as the coefficient characterizing heat transfer at the fixed face approaches infinity. In Section 3, we analyze specific cases that yield explicit similarity solutions in parametric form. Finally, we present our conclusions, along with an appendix discussing the canonical solitonic reciprocal reduction for a third-order nonlinear evolution equation with an exponential source term.

\section{Connection between Stefan type problems with a class of moving boundary problems with an exponential source term}

In this section we establish a connection between one dimensional Stefan type problems with a class of moving boundary problems governed by a nonlinear evolution equation which incorporates an exponential source term. First, we will consider a Stefan-type problem with Dirichlet condition at the fixed face $z=0$. Then, in the next subsection we will address the case in which a Robin or Neumann type condition is imposed on the fixed face. We also study the convergence of the Stefan problem with a convective boundary condition when the coefficient that characterizes the heat transfer at the fixed face goes to infinity.

\subsection{Stefan type problem with Dirichlet condition}
We consider the following classical Stefan-type problem  ($P_1$) with a Dirichlet condition on the fixed face $z=0$ given by
\begin{subequations}\label{P_1}
\begin{align}
&w_{zz}=w_t,\quad &0<z<s(t),\quad t>0,\label{P1-1}\\
&w_z(s(t),t)=-L(t)s'(t),&\qquad t>0,\label{P1-2}\\
&w(s(t),t)=w_m(t)\qquad & t>0,\label{P1-3}\\
&w(0,t)=v(t),\qquad &t>0,\label{P1-4}\\
&s(0)=0.\label{P1-5}
\end{align}
\end{subequations}
We assume that $L=L(t)$ and $v=v(t)$ are  continuous and positive functions for all $t>0$ and $w_m=w_m(t)$ is a differentiable function with $v(t)>w_m(t)>0$ for $t>0$ and $v(0)=w_m(0)$.

The pair $(w,s)$ is said to be solution of the problem $(P_1)$ if $w$, $w_z$, $w_t$ and $w_{zz}$ are continuous in $D=\lbrace(z,t): 0< z< s(t), \quad t>0\rbrace$ and the conditions $\eqref{P1-1}-\eqref{P1-5}$ are satisfied. The function $z=s(t)$ describes the moving boundary, which, due to its unknown nature, is referred to as the free boundary.

\begin{lem}\label{signos}
    
  If $w$ satisfies $\eqref{P1-1}$ in a bounded $D_T=\lbrace(z,t): 0< z< s(t), \quad 0<t<T\rbrace$ and $w$ is continuous in $D_T\cup B_T$ where $B_T=\lbrace(z,t):z=0,\quad 0\leq t<T\rbrace\cup \lbrace(z,t):z=s(t),\quad 0\leq t<T\rbrace$ then, we have 
  \begin{equation}
      w(z,t)>0 \quad and \quad w_z(z,t)\leq 0\quad  \textit{on} \quad D_T\cup B_T
  \end{equation}
\end{lem}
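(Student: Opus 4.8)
The plan is to prove the two assertions separately, in the order $w>0$ first and then $w_z\le 0$, since positivity of $w$ on the parabolic boundary is what one gets directly from the data, and the sign of $w_z$ will follow from a second application of the maximum principle to the derivative.

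\emph{Step 1: positivity of $w$.} Observe that the parabolic boundary of $D_T$ consists of three pieces: the fixed face $z=0$, the free boundary $z=s(t)$, and the degenerate initial "point'' $s(0)=0$. On the fixed face $w(0,t)=v(t)>0$ by hypothesis; on the free boundary $w(s(t),t)=w_m(t)>0$ by hypothesis; and at $t=0$ the two conditions are compatible since $v(0)=w_m(0)>0$. Since $w$ satisfies the heat equation $w_{zz}=w_t$ in $D_T$ and is continuous on $\overline{D_T}$, the weak maximum principle for the heat operator gives that $w$ attains its minimum on the parabolic boundary; as that minimum is strictly positive, $w>0$ throughout $D_T\cup B_T$. (One should remark that $D_T$ is bounded, which is assumed, so the maximum principle applies without growth conditions at infinity.)

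\emph{Step 2: sign of $w_z$.} Let $p:=w_z$. Differentiating \eqref{P1-1} in $z$ shows that $p$ also satisfies the heat equation $p_{zz}=p_t$ in $D_T$. By the strong maximum principle / Hopf lemma it then suffices to control the sign of $p$ on the parabolic boundary. On the free boundary, \eqref{P1-2} gives $w_z(s(t),t)=-L(t)s'(t)$; since $L>0$ and the free boundary of a melting-type problem advances, $s'(t)\ge 0$, so $p\le 0$ there — but to make this rigorous without assuming $s'\ge 0$ a priori, the cleaner route is: along $z=s(t)$ we have $w=w_m(t)$, and by Step 1 together with $w>0$ in the interior and $w=w_m>0$ on the boundary, the point $z=s(t)$ is a boundary maximum of $z\mapsto w(z,t)$ on $[\,\cdot\,,s(t)]$ only if $w$ does not exceed $w_m$; in fact one argues that $w(z,t)\le v(t)$ and compares. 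The most economical argument: apply the maximum principle to $w$ itself to conclude $w_m(t)\le w(z,t)\le v(t)$ is not automatic, so instead evaluate $p=w_z$ on the fixed face. There, differentiating is not directly available since only $w(0,t)=v(t)$ is prescribed; so one uses Hopf's lemma at $z=0$: since $w>0$ inside and $w(0,t)=v(t)$, and $v(t)\ge w_m(t)=\min$-type value, $w$ has no interior minimum, forcing $w_z(0,t)\le 0$ by the boundary-point lemma (the outward normal at $z=0$ points in the $-z$ direction, so $\partial w/\partial(-z)\ge 0$, i.e. $w_z(0,t)\le 0$). Combining $p\le 0$ on $z=0$ and $p\le 0$ on $z=s(t)$, and noting the initial slice degenerates to a point, the maximum principle applied to $p$ on $D_T$ yields $p\le 0$ on all of $D_T\cup B_T$.

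\emph{Main obstacle.} The delicate point is handling the sign of $w_z$ on the two boundary pieces rigorously: on the fixed face only a Dirichlet value is given, so one genuinely needs Hopf's boundary-point lemma (using $w>0$ interior and the comparison $v(t)>w_m(t)$) rather than a direct computation, and on the free boundary one must either invoke $s'\ge 0$ (which itself should be justified from $L>0$ and positivity of $w$, e.g. via the Stefan condition and Hopf's lemma there) or argue via the ordering $w_m(t)\le w\le v(t)$. Establishing this ordering of $w$ between its two boundary values, and thereby pinning down the sign of the normal derivatives at both $z=0$ and $z=s(t)$, is the crux; once that is in place, both conclusions are immediate consequences of the (weak and strong) maximum principles for the one-dimensional heat operator.
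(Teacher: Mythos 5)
The paper does not actually prove this lemma: its ``proof'' is a one-line citation of Friedman (1959), where the corresponding result is established for a fixed-face \emph{flux} condition $w_z(0,t)=-h(t)$ with $h\ge 0$, so that the sign of $w_z$ on $z=0$ is part of the data. Your Step 1 (positivity of $w$ via the weak minimum principle, using $v>0$, $w_m>0$ and the degenerate initial slice) is correct and is the standard argument.

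Step 2, however, contains a genuine gap, and it is exactly the one you flag yourself in your ``main obstacle'' paragraph. To apply the maximum principle to $p=w_z$ you must control the sign of $p$ on both lateral boundaries, and neither control is actually established. At $z=0$ your invocation of the Hopf boundary-point lemma is invalid as written: Hopf's lemma yields $w_z(0,t_0)\le 0$ only if $(0,t_0)$ is a local \emph{maximum} of $w$ over the closed domain, and ``$w>0$ inside, $w(0,t)=v(t)>w_m(t)$'' does not give that. The needed ordering $w_m(t)\le w(z,t)\le v(t)$ \emph{at each fixed} $t$ does not follow from the weak maximum principle, which only bounds $w$ by $\sup$ and $\inf$ of the boundary data over all times; since $v$ and $w_m$ are time-dependent (and no monotonicity is assumed), a rapidly decreasing $v$ can produce $w_z(0,t)>0$ near the fixed face. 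At $z=s(t)$ the situation is circular: $w_z(s(t),t)=-L(t)s'(t)\le 0$ requires $s'\ge 0$, but $s'\ge 0$ is itself normally deduced from $w_z(s(t),t)\le 0$ (via $w\ge w_m(t)=w(s(t),t)$ near the free boundary, which again is the unproved pointwise-in-$t$ ordering). There is also a secondary technical issue: the hypotheses only give $w_z$ continuous in the open domain, so attainment of $\sup p$ on the parabolic boundary needs justification. In short, your reduction of the problem to the boundary signs of $w_z$ is the right skeleton, but the crux --- establishing those signs from Dirichlet data alone --- is left open, and it is precisely the point where Friedman's flux-condition setting (or an added monotonicity hypothesis on the data) is what actually closes the argument.
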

\begin{proof}
To see \cite{Fr1959}    
\end{proof}
 
 Now, we consider a Hopf-Cole transformation to obtain an equivalent problem to ($P_1$) governed by the Burgers equation \cite{Ho1950,Co1951}
\medskip

\begin{thm}\label{theo_1-2}
If $(w,s)$ is a solution to the problem \eqref{P1-1}-\eqref{P1-5}, then the pair $(x,s)$ with
\be\label{transf x-w}
x(z,t)=-\tfrac{2}{\sigma}\tfrac{w_z(z,t)}{w(z,t)},\qquad 0<z<s(t),\quad t>0, 
\ee
satisfies the  problem ($P_2$) given by

\begin{subequations}
\begin{align}
&x_t=x_{zz}-\sigma x x_z,\qquad &0<z<s(t),\quad t>0,\label{P2-1}\\
&x(s(t),t)=\tfrac{2}{\sigma} \tfrac{L(t)}{w_m(t)}s'(t),&\qquad t>0,\label{P2-2}\\
&x_z(s(t),t)=-\tfrac{2}{\sigma}\tfrac{w'_m(t)}{w_m(t)}+\tfrac{\sigma}{2} x^2(s(t),t) \left( 1-\tfrac{w_m(t)}{L(t)}\right),\qquad & t>0,\label{P2-3}\\
&\bint_0^{s(t)} x(\xi,t) \, d\xi = \frac{2}{\sigma} \ln\left(\tfrac{v(t)}{w_m(t)} \right),\qquad &t>0,  \label{P2-4}\\
&s(0)=0,\label{P2-5}
\end{align}
\end{subequations}
for a fixed parameter $\sigma>0.$

\end{thm}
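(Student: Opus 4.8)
The plan is to verify, one by one, that the transformation \eqref{transf x-w} sends a solution $(w,s)$ of $(P_1)$ to a solution $(x,s)$ of $(P_2)$, treating the five conditions of $(P_2)$ in turn. The main computational engine is the standard Cole--Hopf calculation: writing $x=-\tfrac{2}{\sigma}\,(\ln w)_z$, one differentiates the heat equation \eqref{P1-1} and substitutes. I would first establish \eqref{P2-1}: from $x=-\tfrac{2}{\sigma} w_z/w$ one computes $x_t=-\tfrac{2}{\sigma}(w_{zt}/w - w_z w_t/w^2)$ and $x_z=-\tfrac{2}{\sigma}(w_{zz}/w - w_z^2/w^2)$, $x_{zz}=-\tfrac{2}{\sigma}(w_{zzz}/w - 3 w_{zz}w_z/w^2 + 2w_z^3/w^3)$; using $w_t=w_{zz}$ (and its $z$-derivative $w_{zt}=w_{zzz}$) and the identity $\sigma x x_z = \tfrac{4}{\sigma}(w_z w_{zz}/w^2 - w_z^3/w^3)$, the terms combine to give $x_t=x_{zz}-\sigma x x_z$. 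This is routine but must be written out since it is the backbone of the equivalence. Note also that Lemma~\ref{signos} guarantees $w>0$ on $D\cup B$, so the quotient defining $x$ is well defined and smooth wherever $w$ is.

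Next I would handle the two free-boundary conditions \eqref{P2-2} and \eqref{P2-3}. For \eqref{P2-2}, evaluate \eqref{transf x-w} at $z=s(t)$ and substitute the Stefan condition \eqref{P1-2} $w_z(s(t),t)=-L(t)s'(t)$ together with \eqref{P1-3} $w(s(t),t)=w_m(t)$; this immediately yields $x(s(t),t)=\tfrac{2}{\sigma}\tfrac{L(t)}{w_m(t)}s'(t)$. For \eqref{P2-3} I need $x_z$ at the boundary: from $x_z=-\tfrac{2}{\sigma}(w_{zz}/w - w_z^2/w^2)$ and $w_{zz}=w_t$ at $z=s(t)$, I get $x_z(s(t),t)=-\tfrac{2}{\sigma}\tfrac{w_t(s(t),t)}{w_m(t)}+\tfrac{\sigma}{2}x^2(s(t),t)$. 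The subtlety here is to express $w_t(s(t),t)$ in terms of known quantities. Differentiating $w(s(t),t)=w_m(t)$ totally in $t$ gives $w_z(s(t),t)s'(t)+w_t(s(t),t)=w_m'(t)$, and using \eqref{P1-2} again, $w_t(s(t),t)=w_m'(t)+L(t)s'(t)^2$. Substituting and then re-expressing $s'(t)$ via $x(s(t),t)$ (so $s'(t)=\tfrac{\sigma}{2}\tfrac{w_m(t)}{L(t)}x(s(t),t)$, hence $L(t)s'(t)^2=\tfrac{\sigma^2}{4}\tfrac{w_m(t)^2}{L(t)}x^2(s(t),t)\cdot\tfrac{1}{w_m(t)}$... ) should collapse to exactly the stated right-hand side $-\tfrac{2}{\sigma}\tfrac{w_m'(t)}{w_m(t)}+\tfrac{\sigma}{2}x^2(s(t),t)(1-\tfrac{w_m(t)}{L(t)})$. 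Keeping this algebra straight — in particular the bookkeeping that produces the factor $(1-w_m/L)$ — is the step I expect to be the main obstacle, so I would do it carefully rather than gesture at it.

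For the integral condition \eqref{P2-4}, observe that $x=-\tfrac{2}{\sigma}(\ln w)_z$, so $\int_0^{s(t)}x(\xi,t)\,d\xi = -\tfrac{2}{\sigma}\bigl[\ln w(\xi,t)\bigr]_{\xi=0}^{\xi=s(t)} = -\tfrac{2}{\sigma}\bigl(\ln w(s(t),t)-\ln w(0,t)\bigr)$; then invoking \eqref{P1-3} and \eqref{P1-4} gives $\tfrac{2}{\sigma}\ln\!\bigl(v(t)/w_m(t)\bigr)$, which is well defined and positive because $v(t)>w_m(t)>0$. Finally \eqref{P2-5} is immediate since the moving boundary $s$ is unchanged by the transformation and \eqref{P1-5} already gives $s(0)=0$. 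I would close by noting the regularity transfer: since $w,w_z,w_t,w_{zz}$ are continuous on $D$ and $w>0$ there, $x$ and the derivatives appearing in \eqref{P2-1}--\eqref{P2-3} are continuous on $D$ as well, so $(x,s)$ is a bona fide solution of $(P_2)$ in the required sense.
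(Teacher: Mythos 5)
Your proposal is correct and follows essentially the same route as the paper: verify the Burgers equation by direct Cole--Hopf differentiation, obtain \eqref{P2-2} from \eqref{P1-2}--\eqref{P1-3}, derive \eqref{P2-3} by totally differentiating $w(s(t),t)=w_m(t)$ in $t$ and substituting the heat equation, and integrate $x=-\tfrac{2}{\sigma}(\ln w)_z$ for \eqref{P2-4}. The algebra you flagged for \eqref{P2-3} does close as you indicate (using $s'(t)=\tfrac{\sigma}{2}\tfrac{w_m}{L}x(s(t),t)$ to produce the factor $1-\tfrac{w_m}{L}$), matching the paper's computation up to a harmless reordering.
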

\begin{proof}
From the definition  $x$ given by \eqref{transf x-w} we get that
\begin{equation}\label{xzproblema1}
    \begin{array}{rl}
    x_t(z,t)&=-\tfrac{2}{\sigma} \left( \tfrac{w_{zt}(z,t)}{ w(z,t)}-\tfrac{w_{z}(z,t)w_t(z,t)}{w^2(z,t)}\right),\\[0.25cm]
x_z(z,t)&=-\frac{2}{\sigma} \left(\frac{w_{zz}(z,t)}{w(z,t)}-\frac{w_z^2 (z,t)}{w^2(z,t)}\right),\\[0.25cm] 
x_{zz}(z,t)&=-\tfrac{2}{\sigma} \left(\tfrac{w_{zzz}(z,t)}{w(z,t)}-\tfrac{3w_z(z,t) w_{zz}(z,t)}{w^2(z,t)}+\tfrac{2 w_z^3(z,t)}{w^3(z,t)}\right).
\end{array}
\end{equation}
Therefore, from the heat equation \eqref{P1-1}, 
 we deduce that  
$$\begin{array}{ll}
   & x_{zz}(z,t)-\sigma x(z,t)x_z(z,t)=   -\tfrac{2}{\sigma} \left(\tfrac{w_{zzz}(z,t)}{w(z,t)}-\tfrac{3w_z(z,t) w_{zz}(z,t)}{w^2(z,t)}+\tfrac{2 w_z^3(z,t)}{w^3(z,t)}\right)\\[0.25cm]
 &-\tfrac{4}{\sigma^2}\left(\tfrac{w_{zz}(z,t) w_z(z,t)}{w^2(z,t)}-\tfrac{w_z^3(z,t)}{w ^3(z,t)} \right)=-\tfrac{2}{\sigma} \left( \tfrac{w_{zzz}(z,t)}{ w(z,t)}-\tfrac{w_z(z,t)w_{zz}(z,t)}{w^2(z,t)}\right)\\[0.25cm]
& =-\tfrac{2}{\sigma} \left( \tfrac{w_{zt}(z,t)}{ w(z,t)}-\tfrac{w_{z}(z,t)w_t(z,t)}{w^2(z,t)}\right)=x_t(z,t).
\end{array}$$
That is to say, the function $x$ satisfies the Burgers equation \eqref{P2-1}.

Conditions \eqref{P1-2} and  \eqref{P1-3} imply
that
\begin{equation}
    x(s(t),t)=-\frac{2}{\sigma} \frac{w_z(s(t),t)}{w(s(t),t)}=\frac{2}{\sigma}\frac{L(t)s'(t)}{w_m(t)},
\end{equation}
hence, condition \eqref{P2-2} holds.

If we derive condition \eqref{P1-3} with respect to time $t$ it follows that
$$w_z(s(t),t) s'(t)+w_t(s(t),t)=w_m'(t),$$
then dividing by $w(s(t),t)$ and taking into account  equation \eqref{P1-1} and condition  \eqref{P1-3} we get that
$$\tfrac{w_{zz}(s(t),t)}{w(s(t),t)}=\tfrac{w_m'(t)}{w_m(t)}-\tfrac{w_z(s(t),t)}{w(s(t),t)} s'(t).$$
Then,
$$\begin{array}{rl}
x_z(s(t),t)&=-\tfrac{2}{\sigma}\left( \tfrac{w_{zz}(s(t),t)}{w(s(t),t)}-\tfrac{w^2_z(s(t),t)}{w^2(s(t),t)}\right)\\[0.25cm]
&=-\tfrac{2}{\sigma}\left(\tfrac{w_m'(t)}{w_m(t)}-\tfrac{w_z(s(t),t)}{w(s(t),t)} s'(t)-\tfrac{w^2_z(s(t),t)}{w^2(s(t),t)}\right) \\[0.25cm]
&=-\tfrac{2}{\sigma}\left(\tfrac{w_m'(t)}{w_m(t)}+\tfrac{\sigma}{2} x(s(t),t) s'(t)-\tfrac{\sigma^2}{4}  x^2(s(t),t)\right) \\[0.25cm]
&=-\tfrac{2}{\sigma}\tfrac{w_m'(t)}{w_m(t)}- x^2(s(t),t) \tfrac{\sigma w_m(t)}{2L(t)}+\tfrac{\sigma}{2}  x^2(s(t),t),
\end{array}$$
i.e., the function $x$ satisfies condition \eqref{P2-3}.

Moreover, from  \eqref{transf x-w} it results that
$$\bint_0^{s(t)} x(\xi,t)\,  d \xi=-\tfrac{2}{\sigma}\left(\ln(w(s(t),t))-\ln (w(0,t))\right),$$
from where we get that
$$w(0,t)=w_m(t) \exp\Big( \tfrac{\sigma}{2} \bint_0^{s(t)} x(\xi,t)\,d \xi\Big).$$
Consequently, from \eqref{P1-4}, we obtain that
$$w_m(t) \exp\Big( \tfrac{\sigma}{2} \bint_0^{s(t)} x(\xi,t)d \xi\Big)=v(t),  $$
and then condition \eqref{P2-4} holds.
\end{proof}

In a similar manner, the reciprocal of the previous theorem can be formulated as follows.

\begin{thm}
If $(x,s)$ is a solution to the problem ($P_2$) given by  \eqref{P2-1}-\eqref{P2-5}, then the pair $(w,s)$ with
\be\label{transf w-x}
w(z,t)=w_m(t) \exp\Big(\tfrac{\sigma}{2}\bint_z^{s(t)} x(\xi,t) \, d \xi \Big), \qquad 0<z<s(t),\quad t>0, \quad \sigma>0,
\ee
satisfies the  problem ($P_1$) given by \eqref{P1-1}-\eqref{P1-5}.

\end{thm}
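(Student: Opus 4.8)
The plan is to reverse the chain of computations used in the proof of Theorem~\ref{theo_1-2}. First I would extract the differential consequences of the defining relation \eqref{transf w-x}. Since $\partial_z\int_z^{s(t)} x(\xi,t)\,d\xi=-x(z,t)$, differentiating \eqref{transf w-x} in $z$ gives at once $w_z=-\tfrac{\sigma}{2}x\,w$, so that $x=-\tfrac{2}{\sigma}w_z/w$ — the transformation \eqref{transf x-w} is recovered — and a second differentiation yields $w_{zz}=\big(-\tfrac{\sigma}{2}x_z+\tfrac{\sigma^2}{4}x^2\big)w$. Note that $w>0$ on $D$ because $w_m>0$ and the exponential is positive, so all these quotients are well defined; moreover the required continuity of $w,w_z,w_t,w_{zz}$ on $D$ follows from that of $x,x_z,x_{zz}$ together with the differentiability of $s$.

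The key step is the evaluation of $w_t$. Differentiating \eqref{transf w-x} in $t$ and applying Leibniz' rule for the variable upper limit gives
\[
w_t=\frac{w_m'}{w_m}\,w+\frac{\sigma}{2}\,w\left(x(s(t),t)\,s'(t)+\int_z^{s(t)}x_t(\xi,t)\,d\xi\right).
\]
Here I would use the crucial observation that the Burgers equation \eqref{P2-1} is in conservation form, $x_t=\partial_\xi\big(x_\xi-\tfrac{\sigma}{2}x^2\big)$, so that the integral telescopes:
\[
\int_z^{s(t)}x_t(\xi,t)\,d\xi=\Big(x_z(s(t),t)-\tfrac{\sigma}{2}x^2(s(t),t)\Big)-\Big(x_z(z,t)-\tfrac{\sigma}{2}x^2(z,t)\Big).
\]
Substituting this into the formula for $w_t$ and subtracting the expression already obtained for $w_{zz}$, every term depending on $z$ cancels, and $w_{zz}-w_t=0$ reduces to the single identity
\[
\frac{w_m'}{w_m}+\frac{\sigma}{2}\Big(x(s(t),t)\,s'(t)+x_z(s(t),t)-\tfrac{\sigma}{2}x^2(s(t),t)\Big)=0 .
\]
This identity is precisely what \eqref{P2-2} and \eqref{P2-3} encode: \eqref{P2-2} gives $s'(t)=\tfrac{\sigma w_m(t)}{2L(t)}x(s(t),t)$, hence $x(s(t),t)\,s'(t)=\tfrac{\sigma w_m(t)}{2L(t)}x^2(s(t),t)$, and inserting this together with the expression for $x_z(s(t),t)$ from \eqref{P2-3} makes the bracket vanish. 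Thus $w$ satisfies the heat equation \eqref{P1-1}.

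It then remains to verify the boundary and initial conditions, all of which are immediate. Evaluating \eqref{transf w-x} at $z=s(t)$ gives $w(s(t),t)=w_m(t)$, which is \eqref{P1-3}; combining this with $w_z=-\tfrac{\sigma}{2}x\,w$ at $z=s(t)$ and \eqref{P2-2} gives $w_z(s(t),t)=-L(t)s'(t)$, which is \eqref{P1-2}; evaluating \eqref{transf w-x} at $z=0$ and using \eqref{P2-4} gives $w(0,t)=w_m(t)\exp\big(\ln(v(t)/w_m(t))\big)=v(t)$, which is \eqref{P1-4}; and $s(0)=0$ is just \eqref{P2-5}. The only real obstacle is the bookkeeping in the $w_t$ computation and the algebraic cancellation in the heat-equation check; the one point requiring foresight rather than routine calculation is recognizing that \eqref{P2-1} is a conservation law, which is what makes the integral telescope and reduces everything cleanly to conditions \eqref{P2-2}–\eqref{P2-3}.
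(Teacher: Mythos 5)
Your proposal is correct and follows essentially the same route as the paper: differentiate \eqref{transf w-x} to recover $w_z=-\tfrac{\sigma}{2}xw$ and $w_{zz}$, use Leibniz' rule plus the conservation form of \eqref{P2-1} to telescope the integral in $w_t$, and reduce the heat-equation check to a boundary identity settled by \eqref{P2-2}--\eqref{P2-3}, with the remaining conditions read off directly. Your explicit observation that both \eqref{P2-2} and \eqref{P2-3} are needed for the cancellation is in fact slightly more careful than the paper's wording, which cites only \eqref{P2-3} at that step.
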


\begin{proof}
From the transformation  \eqref{transf w-x} we get that
\begin{equation}\label{aux21}
    \begin{array}{ll}
         w_z(z,t)=-\frac{\sigma}{2}w(z,t) x(z,t), \\[0.25cm]
         w_{zz}(z,t)=w(z,t)\left(\frac{\sigma^2}{4}x^2(z,t)-\frac{\sigma}{2}x_z(z,t) \right),\\[0.25cm]
         w_t(z,t)= w(z,t)\left[\frac{w_m'(t)}{w_m(t)} +\frac{\sigma}{2} \bint_z^{s(t)}x_t(\xi,t) d\xi+\frac{\sigma}{2} x(s(t),t)s'(t) \right].
    \end{array}
\end{equation}
Considering \eqref{P2-1} it results that
\begin{equation}\label{aux22}
\begin{array}{ll}
    \bint_z^{s(t)}x_t(\xi,t) \,d\xi= \bint_z^{s(t)} \bigg(x_{\xi \xi}(\xi,t)-\sigma x(\xi,t) x_{\xi}(\xi,t)\bigg)  \,d\xi\\[0.35cm] 
    =x_{z}(s(t),t)-x_{z}(z,t)-\frac{\sigma}{2}x^2(s(t),t)+\frac{\sigma}{2}x^2(z,t).
    \end{array}
\end{equation}
Then by \eqref{aux21} and \eqref{aux22} and condition \eqref{P2-3} it follows that
\begin{equation}
    w_t(z,t)=w(z,t)\left(\tfrac{\sigma^2}{4} x^2(z,t)-\tfrac{\sigma}{2}x_z(z,t) \right)=w_{zz}(z,t),
\end{equation}
obtaining as an immediate consequence that equation \eqref{P1-1} holds.

From  \eqref{transf w-x}  we immediately get that $w(s(t),t)=w_m(t)$.
Then, from condition \eqref{P2-2} and \eqref{aux21} we get \eqref{P1-2}.

Finally, condition \eqref{P2-4} straightforwardly leads to condition \eqref{P1-4}.
\end{proof}
\begin{obs}\label{signo x}
From transformation \eqref{transf x-w} and Lemma \ref{signos} we have that $x(z,t)\geq 0$ for all $0\leq z\leq s(t),\quad t>0$. 

\end{obs}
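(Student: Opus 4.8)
The plan is to read the sign of $x$ straight off its defining formula \eqref{transf x-w}, using Lemma~\ref{signos} to control the sign of $w$ and of $w_z$. Fix an arbitrary horizon $T>0$. Since $(w,s)$ is a solution of $(P_1)$, the function $w$ satisfies the heat equation \eqref{P1-1} in $D_T=\{(z,t):0<z<s(t),\ 0<t<T\}$ and is continuous on $D_T\cup B_T$ (this regularity is part of the definition of a solution of $(P_1)$, together with the standing hypotheses that $v$ and $w_m$ are positive and continuous, so that the trace of $w$ on the two pieces of $B_T$ is well defined). Hence Lemma~\ref{signos} applies and yields $w(z,t)>0$ and $w_z(z,t)\le 0$ on $D_T\cup B_T$.

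Next I would substitute these two inequalities into \eqref{transf x-w}. Because $\sigma>0$, the prefactor $-\tfrac{2}{\sigma}$ is strictly negative; since $w>0$ on $D_T\cup B_T$ the quotient $w_z/w$ is well defined there and is $\le 0$, being a non-positive number over a strictly positive one; therefore $x(z,t)=-\tfrac{2}{\sigma}\,\tfrac{w_z(z,t)}{w(z,t)}\ge 0$ on $D_T\cup B_T$. In particular the inequality holds for $0\le z\le s(t)$ and $0<t<T$, which covers the boundary values $z=0$ and $z=s(t)$ as well. Since $T>0$ was arbitrary — given any $(z,t)$ with $t>0$ one may choose $T>t$ — this gives $x(z,t)\ge 0$ for all $0\le z\le s(t)$, $t>0$, which is the assertion.

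I expect essentially no obstacle here: the statement is an immediate sign count once Lemma~\ref{signos} is in hand. The only point deserving a word of care is checking that the lemma is genuinely applicable, i.e. that $w$ is continuous up to the parabolic boundary $B_T$ and, crucially, that $w$ does not vanish there so that the quotient in \eqref{transf x-w} makes sense on the closed region; both are guaranteed, respectively, by the regularity built into the notion of solution of $(P_1)$ and by the conclusion $w>0$ on $D_T\cup B_T$ of Lemma~\ref{signos} itself. One could also note, as a by-product, that combining $x\ge 0$ with \eqref{P2-2} and the positivity of $L$ and $w_m$ forces $s'(t)\ge 0$, i.e. the free boundary is nondecreasing, though this is not needed for the present remark.
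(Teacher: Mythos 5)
Your argument is exactly the one the paper intends: Lemma~\ref{signos} gives $w>0$ and $w_z\le 0$ up to the parabolic boundary, and since $\sigma>0$ the formula \eqref{transf x-w} immediately yields $x=-\tfrac{2}{\sigma}\tfrac{w_z}{w}\ge 0$ on $0\le z\le s(t)$, $t>0$. The proposal is correct and follows the same route as the paper's remark; your extra care about the applicability of the lemma and the non-vanishing of $w$ is sound but adds nothing beyond what the paper already assumes.
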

\begin{obs}\label{RT}
Equation \eqref{P2-1} implies that 
\be\label{dx}
dx=x_z dz+(x_{zz}-\sigma xx_z) dt
\ee and reciprocally we have
\be\label{dz}
dz=\frac{1}{x_z} dx-\frac{1}{x_z}(x_{zz}-\sigma xx_z) dt.
\ee 
\end{obs}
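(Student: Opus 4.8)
The plan is to obtain both identities directly from the chain rule, with the governing equation entering only through a substitution. For \eqref{dx}, I would start from the total differential of $x$ regarded as a function of the independent variables $z$ and $t$,
$$dx = x_z\, dz + x_t\, dt,$$
which is legitimate since $x$ is of class $C^2$ on $D$ by the definition of a solution of ($P_2$). Substituting the Burgers equation \eqref{P2-1}, $x_t = x_{zz} - \sigma x x_z$, into the coefficient of $dt$ then produces \eqref{dx} verbatim.

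For the reciprocal identity \eqref{dz}, the idea is simply to invert \eqref{dx} algebraically: from $x_z\, dz = dx - (x_{zz} - \sigma x x_z)\, dt$, dividing by $x_z$ gives \eqref{dz}. The single point deserving attention — and the real reason for recording the remark — is that this division presupposes $x_z \neq 0$, which is exactly the nondegeneracy required for $z$ to be viewed as a smooth function of the new pair of independent variables, and hence for the forthcoming reciprocal transformation to be well defined. Here I would appeal to the sign information already available (Remark \ref{signo x} gives $x \geq 0$, and in the cases treated below $x_z$ will keep a strict sign along the relevant region), or else simply record $x_z \neq 0$ as the standing hypothesis under which \eqref{dz} holds.

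I do not expect a genuine obstacle: once the total differential is written down, \eqref{dx} is immediate from \eqref{P2-1} and \eqref{dz} is one algebraic step. The only care needed is expository, namely making explicit the invertibility condition $x_z \neq 0$, since \eqref{dz} is precisely the infinitesimal form of the change of independent variable that will be used afterwards to transport problem ($P_2$), moving boundary and all, into the reciprocal variables.
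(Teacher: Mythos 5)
Your proposal is correct and coincides with the (implicit) argument in the paper: the remark is just the total differential $dx = x_z\,dz + x_t\,dt$ with $x_t$ replaced via \eqref{P2-1}, followed by algebraic inversion. Your explicit flagging of the nondegeneracy condition $x_z \neq 0$ is a worthwhile addition, since the paper relies on it silently when setting $\Psi = 1/x_z$ in \eqref{T3}.
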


Equation \eqref{dx} leads us to introduce a new function \(\Psi = \Psi(x,t)\) and to define the following Bäcklund reciprocal transformation:

\begin{equation}\label{T3}
x_z(z,t) = \frac{1}{\Psi(x,t)}, \qquad x_t(z,t) = -\frac{\Psi_x(x,t)}{\Psi^3(x,t)} - \frac{\sigma x}{\Psi(x,t)}.
\end{equation}

From this transformation, we can establish the following results:

\begin{thm} \label{theo_2-3}
If $(x,s)$ is a solution to the problem ($P_2$), then $(\Psi,X_0,X_1)$ where $\Psi$ is defined by \eqref{T3} and \begin{equation} \label{XceroXuno}X_0(t)=x(0,t),  \qquad X_1(t)=x(s(t),t), \qquad t>0 \end{equation} satisfy the problem $(P_3)$
given by
\begin{subequations}
\begin{align}
&\Psi_t=\left(- \Psi^{-1}\right)_{xx}+\sigma, & X_0(t)<x<X_1(t), \quad t>0,\label{P3-1}\\
&X_1(t)=\bint_0^t \left( \tfrac{\sigma X_1(\tau)}{\Psi(X_1(\tau),\tau)} \left(\tfrac{ w_m(\tau)}{2L(\tau)}-1\right)-\tfrac{\Psi_x(X_1(\tau),\tau)}{\Psi^3(X_1(\tau),\tau)}\right) \, d\tau,&t>0,\label{P3-2}\\
&\Psi(X_1(t),t)=\Big(-\tfrac{2}{\sigma} \tfrac{w_m'(t)}{w_m(t)}+ \tfrac{\sigma X_1^2(t)}{2}\Big(1-\tfrac{w_m(t)}{L(t)}\Big)\Big)^{-1}, & t>0,\label{P3-3}\\
&\bint_0^t \left(\tfrac{\sigma^2}{4}X_0^2(\tau)-\tfrac{\sigma}{2}\tfrac{1}{\Psi(X_0(\tau),\tau)} \right) \dee \tau =\ln \left( \tfrac{2v(t)}{\sigma w_m(0)}\right), &t>0,\label{P3-4}\\
&X_0(t)=\bint_0^t -\tfrac{\Psi_x(X_0(\tau),\tau)}{\Psi^3(X_0(\tau),\tau)}-\tfrac{\sigma X_0(\tau)}{\Psi(X_0(\tau),\tau)} \dee \tau & t>0,\label{P3-5}
\end{align}
\end{subequations}

\end{thm}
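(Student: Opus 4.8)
The plan is to read the B\"acklund reciprocal transformation \eqref{T3} as a hodograph-type interchange of the independent variable $z$ and the dependent variable $x$, under which $\Psi$ plays the role of the new Jacobian $\Psi=\partial z/\partial x = 1/x_z$. The argument then splits into three pieces: (a) showing the interchange is licit; (b) pushing the Burgers equation \eqref{P2-1} through it to obtain \eqref{P3-1}; and (c) transcribing each condition of $(P_2)$ into its $(P_3)$ counterpart by evaluating the transformation relations at $z=0$ and $z=s(t)$ and integrating in time.

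For (a): by Remark \ref{signo x} we have $x\ge 0$ on $\overline{D}$, and — in accordance with the presence of $1/x_z$ in \eqref{dz} and of $\Psi$ in \eqref{T3} — I would work with smooth solutions for which $x_z>0$. Then, for each fixed $t>0$, the map $\xi\mapsto x(\xi,t)$ is a strictly increasing diffeomorphism of $[0,s(t)]$ onto $[X_0(t),X_1(t)]$, with $X_0,X_1$ as in \eqref{XceroXuno}. Writing $\xi=z(x,t)$ for its inverse, so that $x(z(x,t),t)=x$, we get $\Psi(x,t)=z_x(x,t)=x_z(z(x,t),t)^{-1}>0$. Differentiating $x(z(x,t),t)=x$ in $t$ gives $x_z z_t+x_t=0$, i.e. $z_t=-x_t/x_z=-\Psi x_t$; and since $x_{zz}=\partial_z\!\big(\Psi(x(z,t),t)^{-1}\big)=-\Psi_x/\Psi^{3}$, the Burgers equation \eqref{P2-1} becomes exactly the second relation in \eqref{T3} — the infinitesimal content of Remark \ref{RT}.

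For (b): substituting this second relation of \eqref{T3} into $z_t=-\Psi x_t$ yields $z_t=-\Psi\big(-\tfrac{\Psi_x}{\Psi^{3}}-\tfrac{\sigma x}{\Psi}\big)=\tfrac{\Psi_x}{\Psi^{2}}+\sigma x=-(\Psi^{-1})_x+\sigma x$; differentiating in $x$, using $\Psi=z_x$ and $z_{xt}=z_{tx}$, gives $\Psi_t=(z_t)_x=(-\Psi^{-1})_{xx}+\sigma$ on $X_0(t)<x<X_1(t)$, which is \eqref{P3-1}. For (c): evaluating the first relation of \eqref{T3} at $z=s(t)$ (where $x=X_1(t)$) and inserting \eqref{P2-3} gives $\Psi(X_1(t),t)=x_z(s(t),t)^{-1}=\big(-\tfrac{2}{\sigma}\tfrac{w_m'}{w_m}+\tfrac{\sigma X_1^2}{2}(1-\tfrac{w_m}{L})\big)^{-1}$, i.e. \eqref{P3-3}. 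Differentiating $X_1(t)=x(s(t),t)$ gives $X_1'(t)=x_z(s(t),t)s'(t)+x_t(s(t),t)$; replacing $s'(t)=\tfrac{\sigma}{2}\tfrac{w_m}{L}X_1$ from \eqref{P2-2}, $x_z(s(t),t)=\Psi(X_1,t)^{-1}$, and $x_t(s(t),t)$ from \eqref{T3}, and integrating in $t$, gives \eqref{P3-2}; likewise $X_0'(t)=x_t(0,t)$ together with \eqref{T3} at $z=0$ gives \eqref{P3-5} after integration. Finally, introducing $w$ by \eqref{transf w-x} (so that $w_t=w_{zz}$ and, by \eqref{P2-4}, $w(0,t)=v(t)$), the Hopf-Cole relations \eqref{aux21} together with $w_t=w_{zz}$ give $\partial_t\ln w=\tfrac{\sigma^2}{4}x^2-\tfrac{\sigma}{2}x_z$; evaluating this at $z=0$ and integrating in $t$ yields \eqref{P3-4}.

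The main obstacle is step (a): making the hodograph change of variables rigorous requires $x_z$ of fixed sign on $\overline{D}$ — equivalently strict concavity/convexity of $z\mapsto\ln w$, which is not immediate from $(P_1)$ — together with a careful analysis of the degenerate limit $t\to 0^+$. Since $s(0)=0$, the interval $(X_0(t),X_1(t))$ collapses as $t\to 0^+$, so one must show $X_0(0^+)=X_1(0^+)=0$, verify that the time integrands in \eqref{P3-2}, \eqref{P3-4} and \eqref{P3-5} are integrable up to $\tau=0$, and track the additive constants that appear on integration (using the compatibility $v(0)=w_m(0)$), so that those three conditions hold as stated. The remaining manipulations are the routine chain-rule and integration-by-parts computations indicated above.
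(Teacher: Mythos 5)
Your proposal is correct and follows essentially the same route as the paper: the reciprocal/hodograph interchange yielding \eqref{P3-1} from the exactness of $dz$, evaluation of \eqref{T3} at $z=0$ and $z=s(t)$ followed by time integration for \eqref{P3-2}, \eqref{P3-3} and \eqref{P3-5}, and a logarithmic-derivative computation for \eqref{P3-4} (the paper differentiates $\exp\left(\tfrac{\sigma}{2}\bint_0^{s(t)}x\,d\xi\right)$, you differentiate $\ln w(0,t)$ --- equivalent via \eqref{P2-4}). The caveats you flag ($x_z$ of fixed sign so that $\Psi$ is well defined, the collapse $X_0(0^+)=X_1(0^+)=0$, integrability of the integrands at $\tau=0$) are genuine but are passed over silently in the paper's own proof as well.
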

\begin{proof}

    According to  \eqref{dx} and \eqref{T3} it follows that 
    $$dx=\tfrac{1}{\Psi(x,t)} dz+\left(\tfrac{-\Psi_x(x,t)}{\Psi^3(x,t)}-\sigma x\tfrac{1}{\Psi(x,t)}\right) dt$$ 
    or equivalently 
    $$dz=\Psi(x,t) dx+(\tfrac{\Psi_x(x,t)}{\Psi^2(x,t)}+\sigma x) dt,$$ then we obtain
    $$\Psi_t=\left(\tfrac{-\Psi_x(x,t)}{\Psi^2(x,t)}-\sigma x\right)_x$$ and \eqref{P3-1} holds.
    
In addition we can write $$x(z,t)=\bint_0^z x_z(\xi,t)\dee\xi+N(t) $$ and using \eqref{P2-1} we have $$N'(t)=x_{zz}(0,t)-\sigma x(0,t)x_z(0,t)$$ then 
$$x(z,t)=\bint_0^z x_z(\xi,t)\dee\xi+\bint_0^t \left(x_{zz}(0,\tau)-\sigma x(0,\tau)x_z(0,\tau)\right) \dee \tau.$$ 

By defining \(X_0(t) = x(0,t)\) and \(X_1(t) = x(s(t),t)\), we find that for each \(t > 0\), the variable \(x\) varies between \(X_0(t)\) and \(X_1(t)\).
 For $X_0(t)$ we have the following expression $$X_0(t)=x(0,t)=\bint_0^t \left(x_{zz}(0,\tau)-\sigma x(0,\tau)x_z(0,\tau)\right) \dee \tau=\bint_0^t \left(\tfrac{-\Psi_x(X_0(\tau),\tau)}{\Psi^3(X_0(\tau),\tau)}-\tfrac{\sigma X_0(\tau)}{\Psi(X_0(\tau),\tau)}\right) \dee \tau.$$
 This is \eqref{P3-5}.
For $X_1(t)$ we have $$X_1(t)=\bint_0^{s(t)} x_z(\xi,\tau)\dee\xi+X_0(t)$$ then by using \eqref{P2-1} and \eqref{P2-2} we obtain
\[ X^{'}_1(t)=x_z(s(t),t)s'(t)+\bint_0^{s(t)} x_{zt}(\xi,\tau)\dee\xi+X^{'}_0(t)
\]
\[=x_z(s(t),t)s'(t)+\bint_0^{s(t)} \left(x_{\xi\xi}(\xi,\tau)-\sigma x(\xi,\tau) x_z(\xi,\tau)\right)_\xi\dee\xi+X^{'}_0(t)\]
\[=x_z(s(t),t)s'(t)+x_{zz}(s(t),t)-\sigma x(s(t),t)x_z(s(t),t)\]

\[=\tfrac{\sigma w_m(t)}{2L(t)}\tfrac{X_1(t)}{\Psi(X_1(t),t)}-\tfrac{\Psi_x(X_1(t),t)}{\Psi^3(X_1(t),t)}-\tfrac{\sigma X_1(t)}{\Psi(X_1(t),t)}\] this is \eqref{P3-2}.

The equation \eqref{P3-3} follows immediately from \eqref{P2-2}, \eqref{P2-3} and \eqref{T3}.

To prove \eqref{P3-5} we define $R(t)=\exp\left( \tfrac{\sigma}{2}\bint_0^{s(t)} x(\xi,t)\dee \xi\right) $
then we have
\[\ln(R(t))=\tfrac{\sigma}{2}\bint_0^{s(t)} x(\xi,t)\dee \xi \]and
\[\tfrac{R'(t)}{R(t)}=\tfrac{\sigma}{2}\left[x(s(t),t)s'(t)+\bint_0^{s(t)} x_t(\xi,t)\dee \xi\right]\]
\[=\tfrac{\sigma}{2}\left[x(s(t),t)s'(t)+\bint_0^{s(t)} \left(x_{zz}(\xi,t)-\sigma x_{z}(\xi,t)x(\xi,t)\right)\dee \xi\right]\]
\[=\tfrac{\sigma}{2}\left[x(s(t),t)s'(t)+x_{z}(s(t),t)-\tfrac{\sigma}{2}x^{2}(s(t),t)-x_{z}(0,t)+\tfrac{\sigma}{2}x^{2}(0,t)\right]\]

\[=\tfrac{\sigma}{2}\left[\tfrac{-2 w_m^{'}(t)}{\sigma w_m(t)}-x_{z}(0,t)+\tfrac{\sigma}{2}x^{2}(0,t)\right]=- \tfrac{w_m^{'}(t)}{ w_m(t)}-\tfrac{\sigma}{2\Psi(X_0(t),t)}+\tfrac{\sigma^{2}}{4}X_0^{2}(t)\]

Then 
\[\ln(R(t))-\ln(R(0))=\ln(w_m(0))-\ln(w_m(t))- \bint_0^t \left(\tfrac{\sigma}{2\Psi(X_0(\tau),\tau)}-\tfrac{\sigma^{2}}{4}X_0^{2}(\tau) \right)\dee\tau\]
and 
\[R(t)=\tfrac{w_m(0)}{w_m(t)}\exp\left(- \bint_0^t \left(\tfrac{\sigma}{2\Psi(X_0(\tau),\tau)}-\tfrac{\sigma^{2}}{4}X_0^{2}(\tau) \right)\dee\tau\right),\] therefore \eqref{P3-4} follows.
\end{proof}

\begin{obs} \label{signosX0-X1}
    Notice that from Remark \ref{signo x} and  \eqref{XceroXuno} we have
   $x=X_0(t)\geq 0$ and $x=X_1(t)\geq 0$ for all $t>0$. Moreover, taking into account \eqref{P3-2} and \eqref{P3-5} it turns out that $X_0(0)=0$ and $X_1(0)=0$.
\end{obs}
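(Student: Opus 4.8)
The statement splits into two independent assertions, and I would handle them in turn. For the sign claim, the plan is simply to invoke Remark~\ref{signo x}: since $(x,s)$ solves $(P_2)$, the transformation \eqref{transf x-w} together with Lemma~\ref{signos} guarantees $x(z,t)\geq 0$ for every $0\leq z\leq s(t)$ and $t>0$. Specializing this to the two endpoints $z=0$ and $z=s(t)$ of the interval and using the definitions \eqref{XceroXuno}, namely $X_0(t)=x(0,t)$ and $X_1(t)=x(s(t),t)$, yields $X_0(t)\geq 0$ and $X_1(t)\geq 0$ for all $t>0$ at once. There is nothing further to do here.

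For the initial-value claim, I would read off the integral representations \eqref{P3-5} and \eqref{P3-2} directly. Evaluating \eqref{P3-5} at $t=0$ gives
\[
X_0(0)=\int_0^0\left(-\frac{\Psi_x(X_0(\tau),\tau)}{\Psi^3(X_0(\tau),\tau)}-\frac{\sigma X_0(\tau)}{\Psi(X_0(\tau),\tau)}\right)\dee\tau=0,
\]
since an integral over the degenerate interval $[0,0]$ vanishes; and likewise evaluating \eqref{P3-2} at $t=0$ gives $X_1(0)=0$. Both computations are immediate once one knows that the integrands are locally integrable near $\tau=0$, which is part of what it means for $(\Psi,X_0,X_1)$ to solve $(P_3)$ — itself a consequence, via Theorem~\ref{theo_2-3}, of $(x,s)$ being a solution of $(P_2)$ with the regularity built into the notion of solution and of $s(0)=0$.

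Since both halves reduce to quoting results already established in the excerpt (Remark~\ref{signo x} and the equations of $(P_3)$ in Theorem~\ref{theo_2-3}), I do not anticipate any genuine obstacle. The only point requiring a word of care is the well-posedness of writing the integrals down at $t=0$, i.e.\ that no blow-up of $\Psi$ or $1/\Psi$ occurs as $\tau\to 0^+$; this is covered implicitly by the hypothesis that $(x,s)$ is a bona fide solution of $(P_2)$, so the argument can be kept to a couple of lines.
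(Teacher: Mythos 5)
Your proposal is correct and follows exactly the route the paper intends: the sign claim is Remark~\ref{signo x} specialized to $z=0$ and $z=s(t)$ via \eqref{XceroXuno}, and the initial values follow by evaluating the integral representations \eqref{P3-2} and \eqref{P3-5} over the degenerate interval $[0,0]$. Your added note on local integrability of the integrands near $\tau=0$ is a reasonable (if implicit in the paper) point of care, but does not change the argument.
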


\begin{thm}
If $(\Psi(x,t),X_0(t),X_1(t))$ satisfy the problem $(P_3)$ then $(u(z,t),s(t))$ given by
\begin{equation}\label{defu}
u(z,t)=x, 
\end{equation}with
\begin{equation}\label{defz}  z=\bint_{X_0(t)}^{x} \Psi(\xi,t)\dee\xi \qquad  \textit{and}\qquad s(t)=\bint_{X_0(t)}^{X_1(t)} \Psi(\xi,t)\dee \xi\end{equation}
is a solution to the problem ($P_2$).
\end{thm}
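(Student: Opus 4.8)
The plan is to invert the reciprocal Bäcklund transformation \eqref{T3}, reversing each step of the proof of Theorem \ref{theo_2-3}. \emph{Step 1 (admissibility of the change of variables).} For each fixed $t>0$ the map $x\mapsto z=\int_{X_0(t)}^{x}\Psi(\xi,t)\,d\xi$ is $C^1$ with $\partial z/\partial x=\Psi(x,t)$; taking $\Psi>0$ (which, matching $\Psi=1/x_z$ in the forward direction, I regard as part of the admissibility of a solution of $(P_3)$), this map is strictly increasing, hence invertible, with inverse $x=u(z,t)$ as in \eqref{defu}. By construction $z=0\Leftrightarrow x=X_0(t)$ and $z=s(t)\Leftrightarrow x=X_1(t)$, so $u(0,t)=X_0(t)$, $u(s(t),t)=X_1(t)$, and $u$ is defined on $D=\{(z,t):0<z<s(t),\ t>0\}$. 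Since $X_0(0)=X_1(0)=0$ by Remark \ref{signosX0-X1}, $s(0)=\int_{X_0(0)}^{X_1(0)}\Psi(\xi,0)\,d\xi=0$, which is \eqref{P2-5}.

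\emph{Step 2 (the Bäcklund relations for $u$ and the Burgers equation).} Differentiating the inverse relation gives $u_z=1/\Psi(u(z,t),t)$ immediately. For $u_t$ I differentiate the identity $z\equiv\int_{X_0(t)}^{u(z,t)}\Psi(\xi,t)\,d\xi$ in $t$ at fixed $z$, obtaining $0=\Psi(u,t)\,u_t-\Psi(X_0(t),t)X_0'(t)+\int_{X_0(t)}^{u}\Psi_t(\xi,t)\,d\xi$; I then compute the last integral using the evolution equation \eqref{P3-1}, $\Psi_t=(-\Psi^{-1})_{xx}+\sigma$, and substitute $X_0'(t)$ from \eqref{P3-5}. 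The $X_0$-terms and the $\sigma$-terms cancel and one is left with $u_t=-\Psi_x(u,t)/\Psi^3(u,t)-\sigma u/\Psi(u,t)$, i.e. the second half of \eqref{T3}. Then $u_{zz}=\partial_z\big(1/\Psi(u,t)\big)=-\Psi_x(u,t)u_z/\Psi^2(u,t)=-\Psi_x(u,t)/\Psi^3(u,t)$, so $u_{zz}-\sigma u\,u_z=-\Psi_x/\Psi^3-\sigma u/\Psi=u_t$, which is \eqref{P2-1}.

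\emph{Step 3 (boundary conditions).} Differentiating $s(t)=\int_{X_0(t)}^{X_1(t)}\Psi(\xi,t)\,d\xi$ and using \eqref{P3-1} for the interior integral together with \eqref{P3-2} and \eqref{P3-5} for $X_1'(t)$ and $X_0'(t)$, the $\Psi_x/\Psi^2$ boundary terms cancel in pairs and the remainder collapses to $s'(t)=\tfrac{\sigma w_m(t)}{2L(t)}X_1(t)$; since $u(s(t),t)=X_1(t)$, this is exactly \eqref{P2-2}. Condition \eqref{P2-3} is then immediate, because $u_z(s(t),t)=1/\Psi(X_1(t),t)$ and \eqref{P3-3} is precisely the identity expressing $1/\Psi(X_1(t),t)$ as the right-hand side of \eqref{P2-3} (with $x(s(t),t)=X_1(t)$). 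For \eqref{P2-4} I set $I(t)=\int_0^{s(t)}u(\xi,t)\,d\xi$ and differentiate: using \eqref{P2-2}, the Burgers equation of Step 2 to write $\int_0^{s(t)}u_t=\big[u_z-\tfrac{\sigma}{2}u^2\big]_0^{s(t)}$, and \eqref{P3-3}, the $X_1$-contributions cancel and $\tfrac{\sigma}{2}I'(t)=-w_m'(t)/w_m(t)-\tfrac{\sigma}{2\Psi(X_0(t),t)}+\tfrac{\sigma^2}{4}X_0^2(t)$. Integrating from $0$, where $I(0)=0$ by Step 1, and invoking \eqref{P3-4} gives $\tfrac{\sigma}{2}I(t)=\ln\big(v(t)/w_m(t)\big)$, i.e. \eqref{P2-4}.

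\emph{Main obstacle.} The delicate point is the $t$-differentiation of the integrals with the moving lower limit $X_0(t)$ in Steps 2 and 3: one must keep careful track of the Leibniz boundary terms and check that after inserting \eqref{P3-1}, \eqref{P3-2} and \eqref{P3-5} every term carrying $X_0$ (and the source constant $\sigma$) cancels, leaving exactly the targeted right-hand sides. A secondary issue, to be listed among the hypotheses on a solution of $(P_3)$, is the positivity and sufficient regularity of $\Psi$, needed both to make $x\mapsto z$ a genuine change of variables and to ensure $u,u_z,u_t,u_{zz}$ are continuous on $D$.
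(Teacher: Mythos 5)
Your proposal is correct and follows essentially the same route as the paper: invert $x\mapsto z$ using $\Psi$ as the Jacobian, recover the Bäcklund relations $u_z=1/\Psi$, $u_{zz}=-\Psi_x/\Psi^3$ to get the Burgers equation, and derive the boundary conditions by differentiating $s(t)$ and $\int_0^{s(t)}u\,d\xi$ with the Leibniz terms cancelling via \eqref{P3-1}, \eqref{P3-2} and \eqref{P3-5} (the paper works with $V(t)=\exp(\tfrac{\sigma}{2}\int_0^{s(t)}u)$ rather than your $I(t)$, which is the same computation). Your explicit flagging of the positivity/regularity of $\Psi$ needed for the change of variables is a reasonable addition that the paper leaves implicit.
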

\begin{proof} Given $X_0(t)\leq x\leq X_1(t)$ and $\Psi(x,t)$ from \eqref{defu} can be determine $u(z,t)$ with $z$ given by \eqref{defz}. Moreover, from \eqref{defz} we have
    \[ dz=\Psi(x,t)dx+\left[-\Psi(X_0(t),t)X_0^{'}(t)+\bint_{X_0(t)}^{x} \Psi_t(\xi,t)\dee\xi\right]dt. 
  \]
  Using \eqref{P3-1} we obtain
    \[
    dz=\Psi(x,t)dx+\left[-\Psi(X_0(t),t)X_0^{'}(t)+\tfrac{\Psi_x (x,t)}{\Psi^2(x,t)}+\sigma x -\tfrac{\Psi_x (X_0(t),t)}{\Psi^2(X_0(t),t)}-\sigma X_0(t)\right] dt
  \] and taking into account \eqref{P3-5} follows that
     \[dz=\Psi(x,t)dx+\left[\tfrac{\Psi_x (x,t)}{\Psi^2(x,t)}+\sigma x\right] dt\] or equivalently 
     \[dx=\tfrac{1}{\Psi(x,t)}dz-\tfrac{1}{\Psi(x,t)}\left[\tfrac{\Psi_x (x,t)}{\Psi^2(x,t)}+\sigma x\right] dt\] 
     The fact that $u(z,t)=x$ and $z_x=\Psi$ imply that 
     \[u_z(z,t)=\tfrac{1}{\Psi(x,t)} \qquad u_{zz}(z,t)=-\tfrac{\Psi_x(x,t)}{\Psi^{3}(x,t)}\] then
     \[du=dx=u_z(z,t)dz+\left[u_{zz}(z,t)-\sigma u(z,t)u_z(z,t)\right] dt\] and it results $$u_t=u_{zz}-\sigma u u_z$$this is \eqref{P2-1} holds.

     For $x=X_0(t)$ we obtain $z=0$ and for $x=X_1(t)$ we have $z=s(t)$ which is given by \eqref{defz}.
To prove \eqref{P2-2} we consider \eqref{P3-1} and \eqref{defz} . Follows that $$s'(t)=\Psi(X_1(t),t) X'_1(t)-\Psi(X_0(t),t) X'_0(t)+\bint_{X_0(t)}^{X_1(t)} \Psi_t(\xi,t) \dee \xi$$
$$=\Psi(X_1(t),t) X'_1(t)-\Psi(X_0(t),t) X'_0(t)+\bint_{X_0(t)}^{X_1(t)} \left(-\left(\Psi^{-1}\right)_{\xi\xi}(\xi,t)+\sigma\right) \dee \xi$$
 $$=\Psi(X_1(t),t) X'_1(t)-\Psi(X_0(t),t) X'_0(t)-\left(\Psi^{-1}\right)_{x}(X_1(t),t)+\sigma X_1(t)+\left(\Psi^{-1}\right)_{x}(X_0(t),t)-\sigma X_0(t)$$
 then, taking into account \eqref{P3-2}, \eqref{P3-5}, \eqref{defu} and \eqref{defz} we obtain that $$s'(t)=\tfrac{\sigma w_m(t)}{2L(t)}X_1(t)=\tfrac{\sigma w_m(t)}{2L(t)}u(s(t),t)$$
 Therefore
\eqref{P2-2} holds.

From \eqref{defu} and \eqref{defz} we have
\[u_z(s(t),t)=\tfrac{1}{\Psi(X_1(t),t)},\] by using condition \eqref{P2-2} and \eqref{P3-3} it follows that
\[u_z(s(t),t)=-\tfrac{2}{\sigma} \tfrac{w_m'(t)}{w_m(t)}+ \tfrac{\sigma u^{2}(s(t),t)}{2}\Big(1-\tfrac{w_m(t)}{L(t)}\Big)=-\tfrac{2}{\sigma}\tfrac{w'_m(t)}{w_m(t)}-u(s(t),t)s'(t)+\tfrac{\sigma}{2} u^2(s(t),t),\] this is \eqref{P2-3}.

To prove \eqref{P2-4} we consider $V(t)=\exp\left( \tfrac{\sigma}{2}\bint_0^{s(t)} u(\xi,t)\dee \xi\right) $
then we have
\[\ln(V(t))=\tfrac{\sigma}{2}\bint_0^{s(t)} u(\xi,t)\dee \xi \]and
\[\tfrac{V'(t)}{V(t)}=\tfrac{\sigma}{2}\left[u(s(t),t)s'(t)+\bint_0^{s(t)} u_t(\xi,t)\dee \xi\right]\]
\[=\tfrac{\sigma}{2}\left[u(s(t),t)s'(t)+\bint_0^{s(t)} \left(u_{zz}(\xi,t)-\sigma u_{z}(\xi,t)u(\xi,t)\right)\dee \xi\right]\]
\[=\tfrac{\sigma}{2}\left[u(s(t),t)s'(t)+u_{z}(s(t),t)-\tfrac{\sigma}{2}u^{2}(s(t),t)-u_{z}(0,t)+\tfrac{\sigma}{2}u^{2}(0,t)\right]\]

\[=\tfrac{\sigma}{2}\left[\tfrac{-2 w_m^{'}(t)}{\sigma w_m(t)}-u_{z}(0,t)+\tfrac{\sigma}{2}u^{2}(0,t)\right]=- \tfrac{w_m^{'}(t)}{ w_m(t)}-\tfrac{\sigma}{2\Psi(X_0(t),t)}+\tfrac{\sigma^{2}}{4}X_0^{2}(t)\]

Then 
\[\ln(V(t))-\ln(V(0))=\ln(w_m(0))-\ln(w_m(t))- \bint_0^t \left(\tfrac{\sigma}{2\Psi(X_0(\tau),\tau)}-\tfrac{\sigma^{2}}{4}X_0^{2}(\tau) \right)\dee\tau\]
and 
\[V(t)=\tfrac{w_m(0)}{w_m(t)}\exp\left(- \bint_0^t \left(\tfrac{\sigma}{2\Psi(X_0(\tau),\tau)}-\tfrac{\sigma^{2}}{4}X_0^{2}(\tau) \right)\dee\tau\right).\] Therefore by using \eqref{P3-4} we have
\[V(t)=\tfrac{2}{\sigma w_m(t)} v(t)\] then 
\[\tfrac{\sigma}{2}w_m(t)\exp\left( \tfrac{\sigma}{2}\bint_0^{s(t)} u(\xi,t)\dee \xi\right)=\tfrac{\sigma}{2}w_m(t)V(t)=v(t)\] and \eqref{P3-5} follows.
From Remark \ref{signosX0-X1} and \eqref{defz} we obtain \eqref{P2-5}. 
\end{proof}
Next we obtain a connection between the moving boundary problem ($P_3$) for the nonlinear transport equation incorporating a source term with a class of moving boundary problems for a nonlinear evolution equation incorporating an exponential source  term.

\begin{thm} \label{theo_3-4}
If $(\Psi(x,t),X_0(t),X_1(t))$ is a solution to the problem ($P_3$) given by  \eqref{P3-1}-\eqref{P3-5}, then $(\theta(y,t),Y_0(t),Y_1(t))$ where
\begin{equation} \label{transf theta}
\theta(y,t)=(1+mx) \Psi(x,t),\qquad \qquad y=\tfrac{\ln(1+mx)}{m},
\end{equation}
\begin{equation} \label{transf Y0Y1}
    Y_0(t)=\tfrac{\ln(1+mX_0(t))}{m},\qquad \qquad Y_1(t)=\tfrac{\ln(1+mX_1(t))}{m},
\end{equation}
 for a fixed parameter $m>0$, satisfies the  problem ($P_4$) given by 
 \begin{subequations}
\begin{align}
& \theta_t = -(\theta^{-1})_{yy} - m(\theta^{-1})_y + \sigma \exp(my), \qquad Y_0(t) < y < Y_1(t), & t > 0, \label{P4-1} \\[0.35cm]
& Y_1(t) = \frac{\ln(1 + m I_1(t))}{m}, & t > 0, \label{P4-2} \\[0.35cm]
& \theta(Y_1(t), t) = \exp(m Y_1(t)) \left(-\tfrac{2}{\sigma} \tfrac{w_m'(t)}{w_m(t)} + \tfrac{\sigma (\exp(m Y_1(t)) - 1)^2}{2 m^2} \left(1 - \tfrac{w_m(t)}{L(t)}\right)\right)^{-1}, & t > 0, \label{P4-3} \\[0.35cm]
& \int_0^t \left(\tfrac{\sigma^2(\exp(m Y_0(\tau)) - 1)^2}{4m^2} - \tfrac{\sigma \exp(m Y_0(\tau))}{2 \theta(Y_0(\tau), \tau)}\right) d\tau = \ln\left(\frac{2v(t)}{\sigma w_m(0)}\right), & t > 0, \label{P4-4} \\[0.35cm]
& Y_0(t) = \frac{\ln(1 + m I_0(t))}{m}, & \label{P4-5}
\end{align}
\end{subequations}
with
 \begin{align}
&I_0(t)=- \bint_0^t \exp(mY_0(\tau))\left( \tfrac{\sigma (\exp(m Y_0(\tau))-1) }{m\theta(Y_0(\tau),\tau)}+\tfrac{ \theta_y(Y_0(\tau),\tau)-m \theta(Y_0(\tau),\tau)}{\theta^3(Y_0(\tau),\tau)} \right)\, d \tau, \label{I0}\\[0.35cm]
&I_1(t)=\bint_0^t \exp(mY_1(\tau))\left(\tfrac{\sigma \left(\exp(m Y_1(\tau)-1)\right)}{m\theta(Y_1(\tau),\tau)}\left(\tfrac{ w_m(\tau)}{2 L(\tau)}-1\right) -\tfrac{\theta_y(Y_1(\tau),\tau)-m\theta(Y_1(\tau),\tau)}{\theta^{3}(Y_1(\tau),\tau)}\right) d \tau.\label{I1}
\end{align}
\end{thm}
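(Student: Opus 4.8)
The plan is to prove Theorem~\ref{theo_3-4} by a direct change of variables. The substitution \eqref{transf theta} is invertible: from $y=\tfrac{\ln(1+mx)}{m}$ one has $1+mx=e^{my}$, hence $x=\tfrac{e^{my}-1}{m}$, $\tfrac{dy}{dx}=e^{-my}$, and $\Psi(x,t)=e^{-my}\theta(y,t)$. First I would record the transformation rules for the quantities appearing in $(P_3)$: acting on functions of $y$, $\partial_x=e^{-my}\partial_y$; moreover $\Psi^{-1}=e^{my}\theta^{-1}$, $\Psi_x=e^{-2my}(\theta_y-m\theta)$, $\Psi^3=e^{-3my}\theta^3$, and, since $y$ is a function of $x$ alone, $\Psi_t=e^{-my}\theta_t$ at fixed $x$.

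Next I would substitute these into the evolution equation \eqref{P3-1}. Computing $(-\Psi^{-1})_x=-e^{-my}\partial_y(e^{my}\theta^{-1})=\tfrac{\theta_y}{\theta^2}-\tfrac{m}{\theta}$ and then $(-\Psi^{-1})_{xx}=e^{-my}\partial_y\big(\tfrac{\theta_y}{\theta^2}-\tfrac{m}{\theta}\big)$, one gets $e^{-my}\theta_t=e^{-my}\big(\tfrac{\theta_{yy}}{\theta^2}-\tfrac{2\theta_y^2}{\theta^3}+\tfrac{m\theta_y}{\theta^2}\big)+\sigma$; multiplying through by $e^{my}$ and recognizing $\tfrac{\theta_{yy}}{\theta^2}-\tfrac{2\theta_y^2}{\theta^3}=-(\theta^{-1})_{yy}$ and $\tfrac{m\theta_y}{\theta^2}=-m(\theta^{-1})_y$ yields exactly \eqref{P4-1}. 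The interval $X_0(t)<x<X_1(t)$ maps monotonically onto $Y_0(t)<y<Y_1(t)$ through \eqref{transf Y0Y1}, and because $1+mX_1(t)=e^{mY_1(t)}$ we may replace $X_1(t)$ by $\tfrac{e^{mY_1(t)}-1}{m}$ and $\Psi(X_1(t),t)$ by $e^{-mY_1(t)}\theta(Y_1(t),t)$ in \eqref{P3-3}, which produces \eqref{P4-3}; similarly, substituting $X_0(t)=\tfrac{e^{mY_0(t)}-1}{m}$ and $\tfrac{1}{\Psi(X_0(t),t)}=\tfrac{e^{mY_0(t)}}{\theta(Y_0(t),t)}$ in \eqref{P3-4} gives \eqref{P4-4}.

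For the two integral equations governing the free boundary I would show that the quantities $I_0(t)$ and $I_1(t)$ of \eqref{I0}--\eqref{I1} coincide with $X_0(t)$ and $X_1(t)$ respectively. Using $\tfrac{\Psi_x(X_0(\tau),\tau)}{\Psi^3(X_0(\tau),\tau)}=e^{mY_0(\tau)}\tfrac{\theta_y(Y_0(\tau),\tau)-m\theta(Y_0(\tau),\tau)}{\theta^3(Y_0(\tau),\tau)}$ together with $\tfrac{\sigma X_0(\tau)}{\Psi(X_0(\tau),\tau)}=\tfrac{\sigma(e^{mY_0(\tau)}-1)e^{mY_0(\tau)}}{m\theta(Y_0(\tau),\tau)}$, the integrand of \eqref{P3-5} collapses to the integrand of \eqref{I0}; hence $X_0(t)=I_0(t)$ and therefore $Y_0(t)=\tfrac{\ln(1+mX_0(t))}{m}=\tfrac{\ln(1+mI_0(t))}{m}$, which is \eqref{P4-5}. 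Performing the identical manipulation on \eqref{P3-2}, carrying along the extra factor $\big(\tfrac{w_m(\tau)}{2L(\tau)}-1\big)$, gives $X_1(t)=I_1(t)$ and hence \eqref{P4-2}. The initial values $Y_0(0)=Y_1(0)=0$ follow from $X_0(0)=X_1(0)=0$ (Remark~\ref{signosX0-X1}).

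The only genuinely delicate point is the second-order term of the PDE: one must verify that the non-constant Jacobian $\tfrac{dy}{dx}=e^{-my}$ produces precisely the extra advection term $-m(\theta^{-1})_y$ and converts the constant source $\sigma$ in $(P_3)$ into the exponential source $\sigma e^{my}$ in $(P_4)$. Tracking the exponential weights $e^{-my}$, $e^{-2my}$, $e^{-3my}$ correctly through $(-\Psi^{-1})_{xx}$ and the boundary terms is where a sign or factor slip could occur; everything else is routine algebra built on the identities $1+mx=e^{my}$ and $\Psi=e^{-my}\theta$.
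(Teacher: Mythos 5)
Your proposal is correct and follows essentially the same route as the paper: invert the substitution to get $x=\tfrac{e^{my}-1}{m}$, $\Psi=e^{-my}\theta$, use $\partial_x=e^{-my}\partial_y$ to transform \eqref{P3-1} into \eqref{P4-1}, and use the identities $\tfrac{\Psi_x}{\Psi^3}=\tfrac{e^{my}(\theta_y-m\theta)}{\theta^3}$ and $\tfrac{x}{\Psi}=\tfrac{e^{my}(e^{my}-1)}{m\theta}$ to identify $X_i(t)$ with $I_i(t)$ and convert the remaining boundary conditions (these are exactly the paper's \eqref{Psix}, \eqref{XsobrePsi}, \eqref{PsixsobrePsi3}). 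Your direct computation of $(-\Psi^{-1})_{xx}$ is sound and in fact lands on \eqref{P4-1} with the correct signs, where the paper's displayed intermediate step carries a sign typo.
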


\begin{proof}
Notice that transformation \eqref{transf theta} is well-defined due to the fact that $x\geq 0$ and then $1+mx>0$ for $m>0$. Through simple calculations we get that
    \begin{equation}\label{Psix}
    \Psi(x,t)=\exp(-my)\theta(y,t), \qquad x=\tfrac{\exp(my)-1}{m},
    \end{equation}
    and taking into account that $y_x=\tfrac{1}{1+mx}=\exp(-my)$, we also obtain that
\begin{equation}
    \begin{array}{ll}
    \Psi_t(x,t)=\exp(-my)\theta_t(y,t),\\[0.35cm]
        \left(\Psi^{-1}\right)_x(x,t)=m \theta^{-1}(y,t)+\left(\theta^{-1}\right)_y(y,t), \\[0.35cm]
        \left(\Psi^{-1}\right)_{xx}(x,t)=\left(m \left(\theta^{-1}\right)_y(y,t)+\left(\theta^{-1}\right)_{yy}(y,t) \right) \exp(-my).
    \end{array}
\end{equation}
 As a consequence, if the differential equation \eqref{P3-1} for \(\Psi\) is valid for \(X_0(t) < x < X_1(t)\) and \(t > 0\), then we obtain:
  $$\exp(-my)\theta_t(y,t)=\left(m \left(\theta^{-1}\right)_y(y,t)+\left(\theta^{-1}\right)_{yy}(y,t) \right) \exp(-my)+\sigma.$$
This implies that equation \eqref{P4-1} is satisfied for \(Y_0(t) < y < Y_1(t)\), where \(Y_0\) and \(Y_1\) are defined by \eqref{transf Y0Y1}.

Under the transformation \eqref{transf theta} we also have that
  \begin{equation}\label{XsobrePsi}
     \tfrac{x}{\Psi(x,t)}=\tfrac{\exp(my)(\exp(my)-1)}{m\theta(y,t)},
  \end{equation}
  and
  \begin{equation}\label{PsixsobrePsi3}
      \tfrac{\Psi_x(x,t)}{\Psi^3(x,t)}=\tfrac{\exp(my) \left( \theta_y(y,t)-m\theta(y,t)\right)}{\theta^3(y,t)}.
  \end{equation}
Subsequently, based on condition \eqref{P3-2} and considering \eqref{transf Y0Y1}, \eqref{XsobrePsi}, and \eqref{PsixsobrePsi3}, it can be inferred that
 $$ \tfrac{\exp(mY_1(t))-1}{m}=\bint_0^t \exp(mY_1(\tau))\left(\tfrac{\sigma \left(\exp(m Y_1(\tau)-1)\right)}{m\theta(Y_1(\tau),\tau)}\left(\tfrac{ w_m(\tau)}{2 L(\tau)}-1\right) -\tfrac{\theta_y(Y_1(\tau),\tau)-m\theta(Y_1(\tau),\tau)}{\theta^{3}(Y_1(\tau),\tau)}\right) d \tau  $$
an so   condition \eqref{P4-2} holds, i.e $Y_1(t)=\tfrac{\ln(1+mI_1(t))}{m}$ where $I_1$ is defined by \eqref{I1}.

Condition \eqref{P4-3} follows straightforwardly from \eqref{P3-3}, \eqref{transf Y0Y1} and \eqref{Psix}.

Moreover, by considering \eqref{P3-4}, \eqref{transf Y0Y1}, and \eqref{Psix}, it becomes clear that condition \eqref{P3-4} is satisfied.

Finally,  taking into account \eqref{transf Y0Y1}, \eqref{XsobrePsi}, and \eqref{PsixsobrePsi3}, condition \eqref{P3-5} leads to
$$ \tfrac{\exp(mY_0(t))-1}{m}=- \bint_0^t \exp(mY_0(\tau))\left( \tfrac{\sigma (\exp(m Y_0(\tau))-1) }{m\theta(Y_0(\tau),\tau)}+\tfrac{ \theta_y(Y_0(\tau),\tau)-m \theta(Y_0(\tau),\tau)}{\theta^3(Y_0(\tau),\tau) \exp(m Y_0(\tau))} \right)\, d \tau,$$
from which we deduce that $Y_0(t)=\tfrac{\ln(1+mI_0(t))}{m}$ where $I_0$ is defined by \eqref{I0}, i.e. condition \eqref{P4-5} holds.
\end{proof}

\begin{thm}
If $(\theta,Y_0,Y_1)$ is a solution  to the problem ($P_4$) given by \eqref{P4-1}-\eqref{P4-5} for $m>0$ then 
$(\Psi,X_0,X_1)$  with
\begin{equation} \label{transf psi}
\Psi(x,t)=\exp(-my)\theta(y,t),\qquad  \qquad x=\tfrac{\exp(my)-1}{m},
\end{equation}
\begin{equation} \label{transf X0X1}
   X_0(t)=\tfrac{\exp(mY_0(t))-1}{m},\qquad \qquad X_1(t)=\tfrac{\exp(mY_1(t))-1}{m},
\end{equation}
is a solution to the problem ($P_3$) given by  \eqref{P3-1}-\eqref{P3-5} .
 \end{thm}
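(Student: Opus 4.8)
The plan is to establish this theorem as the inverse of Theorem \ref{theo_3-4}, mirroring the structure used to prove the earlier pairs of reciprocal results (e.g.\ the inverse of Theorem \ref{theo_1-2}). First I would record the inverse substitution formulas: from \eqref{transf psi} one has $y = \tfrac{\ln(1+mx)}{m}$, $\exp(my) = 1+mx$, and $\theta(y,t) = (1+mx)\Psi(x,t)$, so that the change of variables is exactly the one in Theorem \ref{theo_3-4} read backwards. I would then compute the Jacobian-type relations $y_x = \tfrac{1}{1+mx} = \exp(-my)$ together with the derivative identities already displayed in the proof of Theorem \ref{theo_3-4}, namely $\Psi_t = \exp(-my)\theta_t$, $(\Psi^{-1})_x = m\theta^{-1} + (\theta^{-1})_y$, and $(\Psi^{-1})_{xx} = \big(m(\theta^{-1})_y + (\theta^{-1})_{yy}\big)\exp(-my)$, as well as the companion formulas \eqref{XsobrePsi} and \eqref{PsixsobrePsi3}. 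These hold as formal algebraic consequences of the substitution regardless of which equation is assumed, so they can be quoted directly.

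Next I would verify \eqref{P3-1}: starting from \eqref{P4-1} for $\theta$ on $Y_0(t)<y<Y_1(t)$, multiply through by $\exp(-my)$ and use the identities above to recognize the right-hand side as $(-\Psi^{-1})_{xx} + \sigma$, while the left-hand side becomes $\Psi_t$; since $x$ ranges over $X_0(t)<x<X_1(t)$ precisely when $y$ ranges over $Y_0(t)<y<Y_1(t)$ (by monotonicity of $y\mapsto \tfrac{\exp(my)-1}{m}$ and \eqref{transf X0X1}), equation \eqref{P3-1} follows on the correct domain. Then I would treat the boundary and integral conditions one at a time: \eqref{P3-3} is immediate from \eqref{P4-3}, \eqref{transf X0X1} and \eqref{transf psi}; \eqref{P3-4} follows from \eqref{P4-4} together with \eqref{XsobrePsi}-type rewriting of $\tfrac{\sigma^2}{4}X_0^2 - \tfrac{\sigma}{2\Psi(X_0,\cdot)}$ in terms of $Y_0$ and $\theta(Y_0,\cdot)$; and \eqref{P3-2}, \eqref{P3-5} follow by substituting \eqref{XsobrePsi} and \eqref{PsixsobrePsi3} into the integrands $I_1$, $I_0$ of \eqref{I1}, \eqref{I0} and observing that $\tfrac{\exp(mY_i(t))-1}{m} = X_i(t)$, with the resulting integral identities being exactly \eqref{P3-2} and \eqref{P3-5}. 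Finally, $X_0(0)=X_1(0)=0$ (needed implicitly for consistency) follows from $Y_0(0)=Y_1(0)=0$, which is encoded in \eqref{P4-2}, \eqref{P4-5}.

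The routine part is the bookkeeping of substituting the exponential-change-of-variables identities into each of the five conditions; since every step is algebraically reversible, essentially no new computation beyond what appears in the proof of Theorem \ref{theo_3-4} is required, and one can phrase the whole argument as ``reversing'' that proof. The main point requiring a little care is the domain statement: one must check that the map $x = \tfrac{\exp(my)-1}{m}$ is a well-defined increasing bijection from $[Y_0(t),Y_1(t)]$ onto $[X_0(t),X_1(t)]$ for each $t>0$, which uses $m>0$ and the nonnegativity of $X_0,X_1$ (hence $Y_0,Y_1$ are well-defined and real) inherited via \eqref{transf X0X1} from Remark \ref{signosX0-X1}; granting this, the equivalence of the two problems is complete and the theorem is proved.
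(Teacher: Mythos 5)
Your proposal is correct and follows essentially the same route as the paper: it inverts the change of variables of Theorem \ref{theo_3-4}, uses the same derivative identities (the paper merely writes them as expressions for $\theta_t$, $(\theta^{-1})_y$, $(\theta^{-1})_{yy}$ in terms of $\Psi$ rather than the other way around), and recovers \eqref{P3-1}--\eqref{P3-5} by substituting \eqref{XsobrePsi}--\eqref{PsixsobrePsi3} into \eqref{P4-1}--\eqref{P4-5} and the integrands $I_0$, $I_1$. The only cosmetic difference is your explicit attention to the bijectivity of $y\mapsto\tfrac{\exp(my)-1}{m}$, which the paper leaves implicit.
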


\begin{proof}
From \eqref{transf psi} we have that
\begin{equation}
    \exp(my)=1+mx,
\end{equation}
\begin{equation}\label{thetapsi-demo}
    \theta(y,t)=(1+mx)\Psi(x,t).
\end{equation}
    Taking into account that $x_y=1+mx$ it results that
    \begin{equation}
        \begin{array}{lll}
             \theta_t(y,t)=(1+mx)\Psi_t(x,t),\\[0.35cm]
             \left(\theta^{-1}\right)_y(y,t)=-\tfrac{m}{1+mx} \Psi^{-1}(x,t)+\left( \Psi^{-1}\right)_x(x,t),\\[0.35cm]
              \left(\theta^{-1}\right)_{yy}(y,t)=\tfrac{m^2}{1+mx}\Psi^{-1}-m\left(\Psi^{-1} \right)_x+(1+mx)\left(\Psi^{-1} \right)_{xx}.
        \end{array}
    \end{equation}
    Therefore, if \eqref{P4-1} holds for $Y_0(t)<y<Y_1(t)$ and $t>0$ then we immediately get \eqref{P3-1} for $X_0(t)<x<X_1(t),\; t>0$ where $X_0$ and $X_1$ are given by \eqref{transf X0X1}. 

   Based on \eqref{P4-2} and  \eqref{transf X0X1} we deduce that $X_1(t)=I_1(t)$ where $I_1$ is given by \eqref{I1}. Considering \eqref{transf psi} it follows that
   \begin{equation}\label{aux-P4P3}
   \begin{array}{ll}
       \tfrac{\exp(my)}{\theta(y,t)}=\Psi^{-1}(x,t)\\[0.35cm]
              \tfrac{\exp(my)(\theta_y(y,t)-m\theta(y,t))}{\theta^3(y,t)}=\tfrac{\Psi_x(x,t)}{\Psi^3(x,t)},
   \end{array}
   \end{equation}
 and  subsequently we obtain that
   $$X_1(t)=\bint_0^t \left( \tfrac{\sigma X_1(\tau)}{\Psi(X_1(\tau),\tau)} \left(\tfrac{ w_m(\tau)}{2L(\tau)}-1\right)-\tfrac{\Psi_x(X_1(\tau),\tau)}{\Psi^3(X_1(\tau),\tau)}\right) \, d\tau.$$
   As a consequence, condition \eqref{P3-2} is valid.

Taking into account \eqref{transf X0X1} and \eqref{thetapsi-demo}, condition \eqref{P3-3} can be derived directly from \eqref{P4-3}.

Furthermore, considering condition \eqref{P4-3}, \eqref{transf X0X1} and \eqref{aux-P4P3}  leads to \eqref{P3-3}.

Finally,  from \eqref{P4-5} and  \eqref{transf X0X1} it follows that $X_0(t)=I_0(t)$ where $I_0$ is given by \eqref{I0}. Hence, by \eqref{aux-P4P3} we obtain that
$$X_0(t)=- \bint_0^t \left(\tfrac{\sigma X_0(\tau)}{\Psi(X_0(\tau),\tau)} +\tfrac{\Psi_x(X_0(\tau),\tau)}{\Psi^3(X_0(\tau),\tau)} \right)\; d \tau,$$
which means that \eqref{P3-5} holds.
\end{proof}
The following theorem links Stefan's problem $(P_1)$ with the problem $(P_4)$ and allows to parameterize the explicit solution.
\begin{thm} \label{sol}
   If $(w,s)$ is a solution to the problem $(P_1)$ given by \eqref{P1-1}-\eqref{P1-5} then a parametric explicit solution to the problem $(P_4)$ is given by 
    
\begin{equation}\theta(y,t)=\frac{mw_z(z,t)w(z,t)-\tfrac{\sigma}{2} w^{2}(z,t)}{w_{zz}(z,t)w(z,t)-w_z^{2}(z,t)},\quad Y_0(t)\leq y\leq Y_1(t),\quad t>0,\label{sol_1}\end{equation}

  \begin{equation}
 y=\tfrac{1}{m}\ln\left(1-\tfrac{2mw_z(z,t)}{\sigma w(z,t)}\right)  
 \quad 0\leq z\leq s(t),\quad t>0,\label{sol_2} \end{equation} where the free boundaries $Y_0$ and $Y_1$ are given by 
\begin{equation}Y_0(t)= \tfrac{1}{m}\ln\left(1-\tfrac{2mw_z(0,t)}{\sigma w(0,t)}\right)\quad \textit{and}\quad Y_1(t)= \tfrac{1}{m}\ln\left(1-\tfrac{2mw_z(s(t),t)}{\sigma w(s(t),t)}\right),\quad t>0. \label{sol_3}
  \end{equation}
\end{thm}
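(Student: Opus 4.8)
The plan is to obtain the parametric solution of $(P_4)$ by composing the chain of transformations already established in Theorems~\ref{theo_1-2}, \ref{theo_2-3}, and \ref{theo_3-4}, rather than by verifying each equation of $(P_4)$ directly from $(P_1)$. Starting from a solution $(w,s)$ of $(P_1)$, Theorem~\ref{theo_1-2} produces a solution $(x,s)$ of $(P_2)$ with $x(z,t)=-\tfrac{2}{\sigma}\tfrac{w_z(z,t)}{w(z,t)}$; Theorem~\ref{theo_2-3} then produces a solution $(\Psi,X_0,X_1)$ of $(P_3)$ with $\Psi$ defined through \eqref{T3} and $X_0(t)=x(0,t)$, $X_1(t)=x(s(t),t)$; and finally Theorem~\ref{theo_3-4} produces a solution $(\theta,Y_0,Y_1)$ of $(P_4)$ via \eqref{transf theta}--\eqref{transf Y0Y1}. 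Thus existence is immediate; the real content is to express $\theta$, $y$, $Y_0$, $Y_1$ explicitly in terms of $w$ and its derivatives, which is what formulas \eqref{sol_1}--\eqref{sol_3} assert.

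The key computational steps are as follows. First, from $x=-\tfrac{2}{\sigma}\tfrac{w_z}{w}$ one reads off directly that $1+mx=1-\tfrac{2mw_z}{\sigma w}$, so the relation $y=\tfrac{\ln(1+mx)}{m}$ in \eqref{transf theta} becomes \eqref{sol_2}; evaluating at $z=0$ and $z=s(t)$ gives \eqref{sol_3} for $Y_0$ and $Y_1$. Second, for $\theta=(1+mx)\Psi(x,t)$ one needs $\Psi$; by \eqref{T3}, $\Psi(x,t)=1/x_z(z,t)$, and from the second line of \eqref{xzproblema1} we have $x_z=-\tfrac{2}{\sigma}\bigl(\tfrac{w_{zz}}{w}-\tfrac{w_z^2}{w^2}\bigr)=-\tfrac{2}{\sigma}\cdot\tfrac{w_{zz}w-w_z^2}{w^2}$, hence $\Psi=\tfrac{1}{x_z}=-\tfrac{\sigma}{2}\cdot\tfrac{w^2}{w_{zz}w-w_z^2}$. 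Multiplying by $1+mx=1-\tfrac{2mw_z}{\sigma w}=\tfrac{\sigma w-2mw_z}{\sigma w}$ yields
\[
\theta=(1+mx)\Psi=\frac{\sigma w-2mw_z}{\sigma w}\cdot\Bigl(-\frac{\sigma}{2}\Bigr)\frac{w^2}{w_{zz}w-w_z^2}
=\frac{mw_zw-\tfrac{\sigma}{2}w^2}{w_{zz}w-w_z^2},
\]
which is exactly \eqref{sol_1}; the admissible range $Y_0(t)\le y\le Y_1(t)$ corresponds to $0\le z\le s(t)$ under the monotone change of variables, and the sign conditions from Lemma~\ref{signos} and Remarks~\ref{signo x}, \ref{signosX0-X1} guarantee $1+mx>0$ so that the logarithms and the transformation \eqref{transf theta} are well defined.

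I would present the proof in this order: (i) invoke the three composition theorems to assert that $(\theta,Y_0,Y_1)$ solves $(P_4)$; (ii) substitute $x=-\tfrac{2}{\sigma}\tfrac{w_z}{w}$ into $y=\tfrac{\ln(1+mx)}{m}$ to get \eqref{sol_2}, then specialize to $z=0,s(t)$ for \eqref{sol_3}; (iii) compute $\Psi=1/x_z$ from \eqref{xzproblema1} and multiply by $1+mx$ to get \eqref{sol_1}; (iv) remark that the parameter $z\in[0,s(t)]$ traces out $y\in[Y_0(t),Y_1(t)]$. The only genuine subtlety — and the step I expect to require the most care — is making precise that \eqref{sol_1}--\eqref{sol_3} really do constitute a \emph{parametric} description: one must note that \eqref{sol_2} defines $z\mapsto y$ as a strictly monotone (hence invertible) map for each fixed $t$, so that \eqref{sol_1} unambiguously determines $\theta$ as a function of $(y,t)$, and that the denominators $w_{zz}w-w_z^2$ do not vanish, which again follows from $x_z=-\tfrac{2}{\sigma}\tfrac{w_{zz}w-w_z^2}{w^2}$ together with the strict positivity built into the reciprocal transformation \eqref{T3} (one needs $x_z\neq0$ for $\Psi$ to be defined). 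Everything else is routine algebraic substitution already carried out in the preceding proofs.
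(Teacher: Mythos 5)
Your proposal is correct and follows essentially the same route as the paper: it composes the chain of transformations from Theorems \ref{theo_1-2}, \ref{theo_2-3} and \ref{theo_3-4}, computes $\Psi=1/x_z=-\tfrac{\sigma}{2}\tfrac{w^2}{w_{zz}w-w_z^2}$ and $x=-\tfrac{2}{\sigma}\tfrac{w_z}{w}$, and then applies \eqref{transf theta}--\eqref{transf Y0Y1} to obtain \eqref{sol_1}--\eqref{sol_3}, exactly as the paper does. Your additional remarks on the positivity of $1+mx$ and the non-vanishing of the denominator are sensible points of care that the paper leaves implicit.
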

\begin{proof}
    Assuming there exists $(w(z,t),s(t))$ a solution of the Stefan type problem $(P_1)$ given by equations \eqref{P1-1}-\eqref{P1-5}, through \eqref{transf x-w} we obtain a solution $(x,s)$ to the problem $(P_2)$ defined by \eqref{P2-1}-\eqref{P2-5}. Then, by using the reciprocal transformation given by \eqref{T3} we have
    \begin{equation}\Psi(x,t)=-\tfrac{\sigma}{2} \tfrac{w^{2}(z,t)}{w_{zz}(z,t)w(z,t)-w_z^{2}(z,t)},\quad X_0(t)\leq x\leq X_1(t),\quad t>0,
    \end{equation}
    \begin{equation}
 x=-\tfrac{2}{\sigma}\tfrac{w_z(z,t)}{w(z,t)}
 \quad 0\leq z\leq s(t),\quad t>0, \end{equation} and the free boundaries $X_0$ and $X_1$ are given by
 \[X_0(t)=-\tfrac{2}{\sigma}\tfrac{w_z(0,t)}{w(0,t)}, \qquad\qquad X_1(t)=-\tfrac{2}{\sigma}\tfrac{w_z(s(t),t)}{w(s(t),t)}, \qquad t>0.\]
 Finally, by \eqref{transf theta} and \eqref{transf Y0Y1} we obtain the expressions for $(\theta(y,t),Y_0(t),Y_1(t))$ as function of $(w(z,t),s(t)).$
    
\end{proof}

\subsection{Stefan type problem with Robin-Neumann conditions}
In this subsection we consider a Stefan type problem ($P_1^*$) analogous to the one given by \eqref{P1-1}-\eqref{P1-5}, in which we replace the fixed boundary condition given by \eqref{P1-4} by the following condition 
\begin{equation}\label{convectiva}
w_z(0,t)=h(t)(\varepsilon w(0,t)-v(t)),\qquad t>0
\tag{\ref{P1-4}$^*$}
\end{equation}
for $\varepsilon=0$ or $\varepsilon=1$, where $h=h(t)$ is a positive and continuous function which characterises the heat transfer at the fixed face $z=0$ for $t>0$.\\
Notice that for $\varepsilon = 0$ the condition \eqref{convectiva} corresponds to a Neumann condition, and for $\varepsilon = 1$ refers to a Robin condition.\\
Under the transformations \eqref{transf x-w}, \eqref{T3}-\eqref{XceroXuno} and \eqref{transf theta}-\eqref{transf Y0Y1} from \eqref{convectiva} we get the conditions
\begin{equation}\label{convectiva_P2}
h(t)v(t) = [h(t)\varepsilon + \tfrac{\sigma}{2}x(0,t)]w_m(t)\exp\left(\tfrac{\sigma}{2}\bint_0^{s(t)}x(\xi,t)\,d\xi\right),\qquad t>0,
\tag{\ref{P2-4}$^*$}    
\end{equation}

\begin{equation}\label{convectiva_P3}
h(t)v(t) = [h(t)\varepsilon + \tfrac{\sigma}{2}X_0(t)]w_m(0)\exp\left(\bint_0^t \left(\tfrac{\sigma^2}{4}X_0^2(\tau)-\tfrac{\sigma}{2}\tfrac{1}{\Psi(X_0(\tau),\tau)} \right) \dee \tau \right),\qquad t>0,
\tag{\ref{P3-4}$^*$}    
\end{equation}

\begin{equation}\label{convectiva_P4}
h(t)v(t) = [h(t)\varepsilon + \tfrac{\sigma(\exp(mY_0(t))-1)}{2m}]w_m(0)\exp\left(\bint_0^t \left(\tfrac{\sigma^2(\exp(mY_0(t))-1)^2}{4m^2}-\tfrac{\sigma\exp(mY_0(\tau))}{2\theta(Y_0(\tau),\tau)} \right) \dee \tau \right),\qquad t>0.
\tag{\ref{P4-4}$^*$}    
\end{equation}\\
which replace \eqref{P2-4}, \eqref{P3-4} and \eqref{P4-4} in problems ($P_2$), ($P_3$) and ($P_4$) respectively. Let us call these new problems as ($P_2^*$), ($P_3^*$) and ($P_4^*$).\\

For the free boundary problems ($P_1^*$) and ($P_4^*$) we see that Theorem \ref{sol} is also fulfilled and can be established as follows:\\
\begin{thm} \label{sol.1}
   If $(w^{\varepsilon},s^{\varepsilon})$ is a solution to the problem $(P_1^*)$ for $\varepsilon = 0$ or $\varepsilon = 1$, then  a parametric explicit solution to the problem $(P_4^*)$ is given by 
    
\begin{equation}\theta^{\varepsilon}(y,t)=\frac{mw_z^{\varepsilon}(z,t)w^{\varepsilon}(z,t)-\tfrac{\sigma}{2}[w^{\varepsilon}(z,t)]^2}{w_{zz}^{\varepsilon}(z,t)w^{\varepsilon}(z,t)-[w_z^{\varepsilon}(z,t)]^2},\quad Y_0^{\varepsilon}(t)\leq y\leq Y_1^{\varepsilon}(t),\quad t>0,\label{sol_1.1}\end{equation}

  \begin{equation}
 y=\tfrac{1}{m}\ln\left(1-\tfrac{2mw_z^{\varepsilon}(z,t)}{\sigma w^{\varepsilon}(z,t)}\right)  
 \quad 0\leq z\leq s^{\varepsilon}(t),\quad t>0,\label{sol_2.1} \end{equation} where the free boundaries are given by 
\begin{equation}Y_0^{\varepsilon}(t)= \tfrac{1}{m}\ln\left(1-\tfrac{2mw_z^{\varepsilon}(0,t)}{\sigma w^{\varepsilon}(0,t)}\right)\quad \textit{and}\quad Y_1^{\varepsilon}(t)= \tfrac{1}{m}\ln\left(1-\tfrac{2mw_z^{\varepsilon}(s^{\varepsilon}(t),t)}{\sigma w^{\varepsilon}(s^{\varepsilon}(t),t)}\right),\quad t>0.  
  \end{equation}
\end{thm}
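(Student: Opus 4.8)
The plan is to mimic exactly the argument of Theorem \ref{sol}, simply tracking the modified fixed-face condition through the same chain of transformations. First I would invoke the starred analogues of the earlier theorems: the Hopf--Cole transformation \eqref{transf x-w} sends a solution $(w^\varepsilon,s^\varepsilon)$ of $(P_1^*)$ to a solution $(x^\varepsilon,s^\varepsilon)$ of $(P_2^*)$ — indeed, the computations in the proof of Theorem \ref{theo_1-2} showing that \eqref{P2-1}, \eqref{P2-2}, \eqref{P2-3}, \eqref{P2-5} hold are untouched by the change of boundary condition at $z=0$, and the derivation of \eqref{convectiva_P2} from \eqref{convectiva} was already carried out in the text preceding the statement. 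Then the reciprocal Bäcklund transformation \eqref{T3} together with \eqref{XceroXuno} produces a solution $(\Psi^\varepsilon,X_0^\varepsilon,X_1^\varepsilon)$ of $(P_3^*)$, and finally the substitution \eqref{transf theta}--\eqref{transf Y0Y1} yields a solution $(\theta^\varepsilon,Y_0^\varepsilon,Y_1^\varepsilon)$ of $(P_4^*)$: here one only needs to observe that the exponential-source equation \eqref{P4-1} and the free-boundary conditions \eqref{P4-2}, \eqref{P4-3}, \eqref{P4-5}, \eqref{I0}, \eqref{I1} are exactly as in Theorem \ref{theo_3-4} (none of them involves the fixed-face data), while \eqref{P4-4} is replaced throughout by \eqref{convectiva_P4}, whose form is precisely what the composed transformations produce from \eqref{convectiva_P2} via \eqref{convectiva_P3}.

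Next I would assemble the explicit parametric formulas. Composing \eqref{transf x-w}, \eqref{T3}, \eqref{transf theta}, exactly as in the proof of Theorem \ref{sol}, gives
\[
\Psi^\varepsilon(x,t) = -\tfrac{\sigma}{2}\,\frac{[w^\varepsilon(z,t)]^2}{w_{zz}^\varepsilon(z,t)w^\varepsilon(z,t)-[w_z^\varepsilon(z,t)]^2},
\qquad x = -\tfrac{2}{\sigma}\,\frac{w_z^\varepsilon(z,t)}{w^\varepsilon(z,t)},
\]
and then $\theta^\varepsilon(y,t) = (1+mx)\Psi^\varepsilon(x,t)$ with $y = \tfrac1m\ln(1+mx)$ produces \eqref{sol_1.1} and \eqref{sol_2.1}; evaluating at $z=0$ and $z=s^\varepsilon(t)$ gives the formulas for $Y_0^\varepsilon$ and $Y_1^\varepsilon$. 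The positivity of $w^\varepsilon$ and the sign $w_z^\varepsilon\le 0$ guaranteed by Lemma \ref{signos} — which applies unchanged since it only uses \eqref{P1-1} and continuity on the parabolic boundary — ensure $1+mx>0$, so the logarithm and the reciprocal transformation are well defined.

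The only genuine point to check, and the one I expect to be the main (minor) obstacle, is that Lemma \ref{signos} and the sign remarks \ref{signo x}, \ref{signosX0-X1} still go through with the Robin/Neumann condition \eqref{convectiva} in place of the Dirichlet condition \eqref{P1-4}. The maximum-principle argument of \cite{Fr1959} used for Lemma \ref{signos} needs $w>0$ on $B_T$; with a Robin condition $w_z(0,t)=h(t)(\varepsilon w(0,t)-v(t))$ one must argue that a nonpositive interior or boundary minimum of $w$ cannot occur on $z=0$ — for $\varepsilon=0$ (Neumann) the flux condition $w_z(0,t)=-h(t)v(t)<0$ is incompatible with a minimum at $z=0$ by Hopf's lemma, and for $\varepsilon=1$ (Robin) at a would-be zero minimum one gets $w_z(0,t)=-h(t)v(t)<0$, again contradicting Hopf's lemma; hence $w^\varepsilon>0$ and $w_z^\varepsilon\le0$ persist, the denominators in \eqref{sol_1.1} are controlled, and the argument closes. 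Everything else is the same bookkeeping as in Theorems \ref{theo_1-2}--\ref{sol}.
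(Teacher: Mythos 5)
Your proposal is correct and follows essentially the same route as the paper, whose entire proof of this theorem is the single line ``analogous to Theorem \ref{sol}''; you simply spell out that analogy by pushing the starred boundary condition \eqref{convectiva} through the chain $(P_1^*)\to(P_2^*)\to(P_3^*)\to(P_4^*)$ and composing the transformations. Your extra care in verifying that Lemma \ref{signos} (hence $w^\varepsilon>0$, $w_z^\varepsilon\le 0$, and the well-definedness of the logarithm) survives the Robin/Neumann condition via Hopf's lemma is a detail the paper leaves implicit, and it is handled correctly.
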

\begin{proof}
   The proof is analogous to the one provided in Theorem \ref{sol}.
\end{proof}

In the situation where there exists a unique solution to the problems \((P_i)\) and \((P_i^*)\), we will present a convergence result demonstrating that the solution of problem \((P_i^*)\) converges to the solution of problem \((P_i)\) as \(h\) approaches infinity, specifically for \(\varepsilon = 1\).
\begin{thm} \label{epsilon-0}
 Let \(\{h_n\}_{n \in \mathbb{N}}\) denote a sequence of continuous and positive functions such that for each \(t > 0\), \(\lim_{n \to +\infty} h_n(t) = +\infty\). We define \((P_{in}^*)\) as the problem associated with \((P_i^*)\) when \(h = h_n\) in \eqref{convectiva} for \(\varepsilon = 1\).

If there exist unique solution to the problem ($P_{in}^*$), then that solution converges to the unique solution of ($P_i$) when $n\rightarrow +\infty$ for $i=1, 2, 3, 4$. 
\end{thm}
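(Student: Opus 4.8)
The plan is to reduce the convergence statement for all four problems to a single convergence statement at the level of the Stefan problem, namely that the solution $(w^{1}_{n},s^{1}_{n})$ of $(P_{1n}^{*})$ converges to the solution $(w,s)$ of $(P_1)$ as $n\to\infty$, together with convergence of the relevant derivatives $w_z$, $w_{zz}$ on the closure of the domain. Once this is established, the conclusion for $i=2,3,4$ follows immediately: Theorems \ref{theo_1-2}, \ref{theo_2-3}, \ref{theo_3-4} and \ref{sol} (and their starred analogues) express the solutions of $(P_2^{*})$, $(P_3^{*})$, $(P_4^{*})$ as continuous — indeed pointwise explicit — functionals of $(w,w_z,w_{zz},s)$ (see \eqref{transf x-w}, \eqref{T3}, \eqref{transf theta}, and \eqref{sol_1}--\eqref{sol_3}), and since by hypothesis each of these problems has a unique solution, the image under these transformations of the limit $(w,s)$ must coincide with the unique solution of the corresponding unstarred problem. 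So the bulk of the argument is the case $i=1$.

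For $i=1$, I would argue as follows. First, observe that the only difference between $(P_{1n}^{*})$ and $(P_1)$ is the fixed-face condition: $(P_{1n}^{*})$ imposes $w_z(0,t)=h_n(t)(w(0,t)-v(t))$ while $(P_1)$ imposes $w(0,t)=v(t)$. Rewriting the Robin condition as $w(0,t)-v(t)=w_z(0,t)/h_n(t)$ and using Lemma \ref{signos} — which gives the uniform bounds $w^{1}_{n}>0$ and $w^{1}_{nz}\le 0$, hence that $w^{1}_{nz}(0,t)$ is controlled — one sees formally that $w^{1}_{n}(0,t)-v(t)\to 0$ as $n\to\infty$, so the Robin condition degenerates to the Dirichlet condition in the limit. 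The substantive work is to upgrade this formal limit to genuine convergence of $(w^{1}_{n},s^{1}_{n})$: I would obtain, from Lemma \ref{signos} and standard parabolic a priori estimates on the (time-dependent, but on each compact time interval bounded) domain $D_T$, uniform bounds on $w^{1}_{n}$ and its derivatives and on $s^{1}_{n}$, ${s^{1}_{n}}'$ independent of $n$; extract a convergent subsequence via Arzelà--Ascoli (for $s_n$ and $w_n$ up to the boundary) together with interior Schauder estimates (for the derivatives); pass to the limit in \eqref{P1-1}--\eqref{P1-3}, \eqref{P1-5} and in the degenerating condition \eqref{convectiva}; and conclude that the limit solves $(P_1)$. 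By the assumed uniqueness for $(P_1)$, the full sequence converges, not merely a subsequence.

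The main obstacle is the free boundary: one must control $s^{1}_{n}$ and ${s^{1}_{n}}'$ uniformly in $n$ up to $t=0$, where $s_n(0)=0$, and ensure the limiting moving boundary is non-degenerate and that the convergence $w^{1}_{n}\to w$ holds uniformly up to the moving boundary $z=s^{1}_{n}(t)$ despite the domains $D_T^{n}$ themselves varying with $n$. The standard device is to flatten the free boundary by the change of variables $\eta=z/s_n(t)$, turning $(P_{1n}^{*})$ into a problem on the fixed cylinder $(0,1)\times(0,T)$ with $n$-dependent coefficients that are nonetheless uniformly bounded and uniformly parabolic (using the lower bound on $s_n$ away from $t=0$ coming from the Stefan condition \eqref{P1-2} and positivity of $L$); then the a priori estimates and compactness arguments above apply on this fixed domain. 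I would also invoke known existence/uniqueness and continuous-dependence theory for one-phase Stefan problems with Robin and Dirichlet data (e.g. along the lines of \cite{Fr1982, Ta2000, Ca1984}) to shorten this step, since the hypothesis of the theorem already presupposes existence and uniqueness for every $(P_{in}^{*})$ and $(P_i)$.
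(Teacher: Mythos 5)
Your proposal follows the same route as the paper: for $i=1$ the Robin condition, rewritten as $w(0,t)-v(t)=w_z(0,t)/h_n(t)$, degenerates to the Dirichlet condition as $h_n\to+\infty$, and the cases $i=2,3,4$ are then inherited through the chain of transformations linking $(P_1)$ to $(P_2)$, $(P_3)$, $(P_4)$. The paper's own argument is in fact terser than yours — it simply asserts that the limits $w=\lim_{n\to+\infty}w_n^1$ and $s=\lim_{n\to+\infty}s_n^1$ exist and satisfy $(P_1)$ — so the a priori estimates, boundary flattening and compactness programme you outline fills in work the paper leaves implicit rather than departing from its approach.
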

\begin{proof}
    If $(w_n^1,s_n^{1})$ is the solution of the problem ($P_{1n}^*$), from condition \eqref{convectiva} we have that $h_n\to +\infty$ implies that $w_n^1(0,t) \rightarrow v(t)$ when $n\to +\infty$, for each $t>0$.
    Therefore the pair $(w,s)$ where $ w(x,t)=  
    \lim_{n\to +\infty} w_n^1(x,t)$ and $s(t)=\lim_{n\to +\infty} s_n^1(t) $ 
    satisfy \eqref{P1-1}-\eqref{P1-5}, this means that it is the solution of ($P_1$). 
    Therefore ($P_{1n}^*$)$\to$ ($P_1$) for $n\to +\infty$.\\
    This convergence implies that for $n\to +\infty$ results that $h_n\to +\infty$ and then the solution of the problems $P_{2n}^*$, $P_{3n}^*$ and $P_{4n}^*$ converges to the solution of the problems $P_2$, $P_3$ and $P_4$ respectively.
\end{proof}

\section{Explicit solutions for particular cases}

We will analyze the existence of the solutions to the problems $P_1$ and $P^*_1$ for the particular case when the functions $L(t)$, $v(t)$, $h(t)$ and $w_m(t)$ are assumed as follows
\be
L(t)=L_0\sqrt{t},\qquad v(t)=2v_0\sqrt{t},\qquad h(t)=\tfrac{h_0}{\sqrt{t}},\qquad w_m(t)=2w_{m_0}\sqrt{t},\label{p_cond}
\ee
where $L_0$, $v_0$, $h_0$ and $w_{m_0}$ are non-negative constants with $v_0>w_{m_0}$.

\subsection{Particular case with a Dirichlet condition}

In this subsection, we examine the problem (P1) that involves a Dirichlet boundary condition at the fixed face $x=0$. The problem described by equations \eqref{P1-1}-\eqref{P1-5} is reformulated as follows:\begin{subequations}\label{dirich_part}
\begin{align}
&w_{zz}=w_t,\qquad &0<z<s(t),&\qquad t>0,\label{P´1-1}\\
&w_z(s(t),t)=-L_0\sqrt{t}s'(t),& &\qquad t>0,\label{P´1-2}\\
&w(s(t),t)=2w_{m_0}\sqrt{t},& &\qquad t>0,\label{P´1-3}\\
&w(0,t)=2v_0\sqrt{t},& &\qquad t>0,\label{P´1-4}\\
&s(0)=0\label{P´1-5}
\end{align}
\end{subequations}
and we can state the following theorem of existence and uniqueness of solution.
\begin{thm}
There exists a unique similarity type solution $\big( w, s \big)$ to the problem \eqref{P´1-1}-\eqref{P´1-5} which is given by:
    \begin{align}
        &w(z,t)= 2v_0\sqrt{t}\exp{\left(-\tfrac{z^2}{4t}\right)} + v_0z\sqrt{\pi}\erf{\left(\tfrac{z}{2\sqrt{t}}\right)} - L_0z\gamma^* - v_0z\sqrt{\pi}\erf{(\gamma^*)},\quad 0\leq z\leq s(t),\quad t>0, \label{w_solution}\\
        &s(t)=2\gamma^*\sqrt{t},\quad t>0  \label{frontera_p_solution}
    \end{align}
    where $\gamma^*$ is the unique solution to the equation given by
    $$v_0\exp(-\gamma^2) = L_0\gamma^2 + w_{m_0},\qquad \gamma> 0.$$
\end{thm}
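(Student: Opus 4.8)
The plan is to perform a similarity reduction. Introducing $\eta=\tfrac{z}{2\sqrt{t}}$, I would seek a solution of the form $w(z,t)=\sqrt{t}\,\phi(\eta)$ with free boundary $s(t)=2\gamma\sqrt{t}$ for a constant $\gamma>0$ to be determined; this ansatz already forces $s(0)=0$. Substituting into \eqref{P´1-1}, using $w_z=\tfrac12\phi'(\eta)$, $w_{zz}=\tfrac{1}{4\sqrt{t}}\phi''(\eta)$ and $w_t=\tfrac{1}{2\sqrt{t}}\bigl(\phi(\eta)-\eta\phi'(\eta)\bigr)$, reduces the heat equation to the linear ODE $\phi''+2\eta\phi'-2\phi=0$. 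A fundamental system is $\phi_1(\eta)=\eta$ and $\phi_2(\eta)=e^{-\eta^2}+\sqrt{\pi}\,\eta\,\erf(\eta)$, both verified by direct substitution (note the clean identity $\phi_2'(\eta)=\sqrt{\pi}\,\erf(\eta)$), so $\phi=A\,\phi_2+B\,\phi_1$ and hence $\phi'(\eta)=A\sqrt{\pi}\,\erf(\eta)+B$ for constants $A,B$.

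Next I would impose the three remaining conditions. The Dirichlet condition \eqref{P´1-4} is $\phi(0)=2v_0$, i.e. $A=2v_0$. The Stefan condition \eqref{P´1-2} evaluated at $\eta=\gamma$, with $s'(t)=\gamma/\sqrt{t}$, becomes $\tfrac12\phi'(\gamma)=-L_0\gamma$, i.e. $B=-2L_0\gamma-2v_0\sqrt{\pi}\,\erf(\gamma)$. Finally \eqref{P´1-3} reads $\phi(\gamma)=2w_{m_0}$; inserting the values of $A$ and $B$, the two terms proportional to $\gamma\,\erf(\gamma)$ cancel and what survives is exactly $v_0\,e^{-\gamma^2}=L_0\gamma^2+w_{m_0}$. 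Rewriting $\sqrt{t}\,\phi(\eta)$ in the original variable via $z=2\eta\sqrt{t}$ then reproduces the stated formulas \eqref{w_solution}--\eqref{frontera_p_solution}.

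It then remains to show that $v_0 e^{-\gamma^2}=L_0\gamma^2+w_{m_0}$ has a unique positive root. I would set $F(\gamma)=v_0 e^{-\gamma^2}-L_0\gamma^2-w_{m_0}$; since $w_m(t)>0$ for $t>0$ forces $w_{m_0}>0$, and $v_0>w_{m_0}$, we get $F(0)=v_0-w_{m_0}>0$, whereas $F(\gamma)\to-w_{m_0}<0$ as $\gamma\to+\infty$ (and $\to-\infty$ when $L_0>0$). Since $F'(\gamma)=-2\gamma\bigl(v_0 e^{-\gamma^2}+L_0\bigr)<0$ on $(0,+\infty)$, $F$ is strictly decreasing, so there is a unique $\gamma^*>0$ with $F(\gamma^*)=0$.

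Finally I would check admissibility of the constructed pair $(w,s)$. Smoothness on $D$ and $s(0)=0$ are clear, $s(t)=2\gamma^*\sqrt t>0$ for $t>0$, and $v(t)=2v_0\sqrt{t}>2w_{m_0}\sqrt{t}=w_m(t)$ with $v(0)=w_m(0)=0$. For the sign conditions of Lemma \ref{signos}: $\phi'(\eta)=2v_0\sqrt{\pi}\,\erf(\eta)+B$ is non-decreasing (as $v_0\ge0$) and $\phi'(\gamma^*)=-2L_0\gamma^*\le0$, so $\phi'\le0$ on $[0,\gamma^*]$, hence $\phi$ is non-increasing with $\phi\ge\phi(\gamma^*)=2w_{m_0}>0$; therefore $w=\sqrt{t}\,\phi>0$ and $w_z=\tfrac12\phi'\le0$ in $D$, as required. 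Uniqueness within the similarity class is automatic, since every similarity solution must take the above form and \eqref{P´1-2}--\eqref{P´1-4} determine $(A,B,\gamma)$ uniquely as $\bigl(2v_0,\,-2L_0\gamma^*-2v_0\sqrt{\pi}\,\erf(\gamma^*),\,\gamma^*\bigr)$. The only genuinely analytic step is the monotonicity argument for $F$; everything else is routine, the one point worth recording being that the hypothesis $w_m>0$ excludes the degenerate case $L_0=w_{m_0}=0$, in which no positive root exists.
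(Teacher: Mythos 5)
Your proposal is correct and follows essentially the same route as the paper: the identical similarity reduction to the ODE $T''+2\eta T'-2T=0$ (your $\phi$ is just $2T$), the same fundamental system $\{\eta,\ e^{-\eta^2}+\sqrt{\pi}\,\eta\,\erf(\eta)\}$, the same determination of the constants from the boundary conditions, and the same monotonicity argument for the unique positive root of $v_0e^{-\gamma^2}=L_0\gamma^2+w_{m_0}$. Your additional verification of the sign conditions $w>0$, $w_z\le 0$ is a small bonus the paper leaves implicit.
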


\begin{proof}
    To solve \eqref{P´1-1}-\eqref{P´1-5} 
    we propose the following similarity transformation
    \begin{equation}
        T(\eta)=\tfrac{1}{2\sqrt{t}} w(z,t),\qquad \eta=\tfrac{z}{2\sqrt{t}},\label{T-t}.
    \end{equation}
    From this transformation, we find that the free boundary is given by
    \be
        s(t)=2\gamma \sqrt{t}  \label{frontera_p}
    \ee
    where $\gamma > 0$ must be determined.\\
    From \eqref{T-t}, we have that\\
    \be 
        w_z(z,t) = T'(\eta), \qquad
        w_{zz}(z,t) = \tfrac{1}{2\sqrt{t}}T''(\eta), \qquad
        w_t(z,t) = \tfrac{1}{\sqrt{t}}T(\eta) - \tfrac{\eta}{\sqrt{t}}T'(\eta). \label{wt-trans}
    \ee \\
    Using \eqref{wt-trans} in \eqref{P´1-1} we get
    $$T''(\eta)+2\eta T'(\eta)-2T(\eta)=0,\qquad 0<\eta<\gamma.$$
   Taking into account \eqref{P´1-2}, \eqref{frontera_p} and \eqref{wt-trans} we have\\
    $$T'(\gamma)=-L_0\gamma.$$
    From \eqref{P´1-3}, \eqref{T-t} and \eqref{frontera_p} it follows
    $$T(\gamma)=w_{m_0},$$
    besides, from \eqref{P´1-4} and \eqref{T-t} we obtain
    $$T(0)=v_0.$$
    Then, the problem \eqref{P´1-1}-\eqref{P´1-5} 
    is equivalent to the following problem given by
    \begin{subequations} \label{T-dir-prob}
    \begin{align} 
        &T''(\eta)+2\eta T'(\eta)-2T(\eta)=0,\qquad &0<\eta<\gamma,\label{T-ecdif}\\
        &T'(\gamma)=-L_0\gamma,\label{T-ecdif-2}\\
        &T(\gamma)=w_{m_0}, \label{T-ecdif-3}\\
        &T(0)=v_0. \label{T-cond en 0}
    \end{align}
    \end{subequations}
    The solution of \eqref{T-ecdif} was obtained in \cite{SaTa2011-a} and it is given by
    \begin{align}
      T(\eta)=A\Big[ \exp(-\eta^2)+\sqrt{\pi} \eta \erf(\eta)\Big]+B\eta, \label{T_sol}  
    \end{align}
    where $A$ and $B$ are determined from conditions \eqref{T-cond en 0} and \eqref{T-ecdif-2}. We have
    $$A = v_0 \qquad , \qquad B = -L_0\gamma - v_0\sqrt{\pi}\erf(\gamma).$$
    Using \eqref{T-ecdif-3}, we find that the coefficient $\gamma$ must satisfy the equation:
    \begin{align}
        &v_0\exp(-\gamma^2) = L_0\gamma^2 + w_{m_0} \label{gamma}.
    \end{align}\\
    Defining $G(\gamma) = v_0\exp(-\gamma^2)$ and $F(\gamma) = L_0\gamma^2 + w_{m_0}$ we have
    $$G(0) = v_0, \qquad G(+\infty) = 0, \qquad G'(\gamma) < 0, \qquad \gamma > 0,$$
    $$F(0) = w_{m_0}, \qquad F(+\infty) = +\infty, \qquad F'(\gamma) > 0, \qquad \gamma > 0.$$\\
    Based on the properties of the functions \( G \) and \( F \), and considering that \( v_0 > w_{m_0} \), there exists a unique value \( \gamma^* \) such that \eqref{gamma} is satisfied.

    Then, the solution to the problem \eqref{T-ecdif}-\eqref{T-cond en 0} is given by\\
    $$T(\eta)=v_0\Big[ \exp(-\eta^2)+\sqrt{\pi} \eta \erf(\eta)\Big]+\Big[ -L_0\gamma^* - v_0\sqrt{\pi}\erf(\gamma^*) \Big]\eta,$$
    where $\gamma^*$ is the unique solution to equation \eqref{gamma}.\\
    Therefore, there exists a unique similarity solution to the problem \eqref{P´1-1}-\eqref{P´1-5}
    which is expressed by \eqref{w_solution} and \eqref{frontera_p_solution}.\\
\end{proof}

\begin{obs}
    Assuming \eqref{p_cond} the problem ($P_4$) admits the parametric solution given by \eqref{sol_1}-\eqref{sol_3} where $w(x,t)$ and $s(t)$ are defined by \eqref{w_solution} and \eqref{frontera_p_solution}.
\end{obs}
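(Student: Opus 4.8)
The plan is to obtain this as a direct corollary of Theorem~\ref{sol} applied to the similarity solution just exhibited. First I would recall that, under the choices \eqref{p_cond}, the preceding theorem guarantees that the pair $(w,s)$ given by \eqref{w_solution}--\eqref{frontera_p_solution} is the (unique similarity-type) solution of problem $(P_1)$; this is exactly the hypothesis required in Theorem~\ref{sol}. Invoking that theorem then immediately produces a parametric explicit solution $(\theta,Y_0,Y_1)$ of $(P_4)$ of the form \eqref{sol_1}--\eqref{sol_3} with this particular $w$ and $s$ inserted, which is precisely the content of the remark. So the proof reduces to (i) verifying that the chain of transformations \eqref{transf x-w}, \eqref{T3}, \eqref{transf theta} underlying Theorem~\ref{sol} is non-degenerate for this $w$, and (ii) rewriting the resulting formulas in closed form.

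For (i), from \eqref{w_solution} the function $w$ is smooth on $D$, and by Lemma~\ref{signos} (or directly, since $w=2\sqrt{t}\,T(\eta)$ with $T>0$ and $T'(\eta)=v_0\sqrt{\pi}\erf(\eta)+B$ nonpositive on $[0,\gamma^*]$, where $B=-L_0\gamma^*-v_0\sqrt{\pi}\erf(\gamma^*)$) one has $w>0$ and $w_z\le 0$ on $\overline{D}$; hence $x=-\tfrac{2}{\sigma}w_z/w\ge 0$, so $1+mx>0$ and \eqref{transf theta} is well defined, in accordance with Remarks~\ref{signo x} and \ref{signosX0-X1}. The one point that genuinely needs attention is that \eqref{T3} requires $x_z\neq 0$, equivalently $w_{zz}w-w_z^2\neq 0$: in similarity variables $x_z$ is proportional to $\big(T''(\eta)T(\eta)-T'^2(\eta)\big)/T^2(\eta)$ with $T''(\eta)=2v_0\exp(-\eta^2)$, and one must check from the explicit $T$ that $T''T-T'^2$ keeps a constant sign on $0\le\eta\le\gamma^*$, so that $\Psi=1/x_z$ and hence $\theta$ stay finite on the whole parametric range. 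This is the main (indeed essentially the only) obstacle; once it is dealt with, the hypotheses of Theorem~\ref{sol} hold and that theorem delivers the parametric solution.

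For (ii), I would make the formulas explicit by substituting \eqref{w_solution} and its derivatives into \eqref{sol_1}--\eqref{sol_3}. The computation is routine once one uses the similarity representation $w(z,t)=2\sqrt{t}\,T(\eta)$, $\eta=z/(2\sqrt{t})$, for which $w_z(z,t)=T'(\eta)$, $w_{zz}(z,t)=\tfrac{1}{2\sqrt{t}}T''(\eta)$ and $w_t=w_{zz}$ by \eqref{P1-1}, together with the closed forms $T(\eta)=v_0[\exp(-\eta^2)+\sqrt{\pi}\eta\erf(\eta)]+B\eta$, $T'(\eta)=v_0\sqrt{\pi}\erf(\eta)+B$, $T''(\eta)=2v_0\exp(-\eta^2)$ and the identity $T''=2T-2\eta T'$. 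Then $\theta$ and $y$ become explicit functions of $(\eta,t)$ on $\eta\in[0,\gamma^*]$, and the free boundaries $Y_0(t)$, $Y_1(t)$ are recovered by evaluating the expression for $y$ at $\eta=0$ and $\eta=\gamma^*$ respectively, with $\gamma^*$ the unique root of \eqref{gamma}. This establishes the remark.
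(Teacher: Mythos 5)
Your proposal is correct and follows exactly the route the paper intends: the remark is stated without proof precisely because it is an immediate corollary of Theorem~\ref{sol} applied to the similarity solution \eqref{w_solution}--\eqref{frontera_p_solution} of $(P_1)$ under \eqref{p_cond}. Your additional check that the chain of transformations is non-degenerate (in particular that $w_{zz}w-w_z^2$, equivalently $T''T-T'^2$, does not vanish so that $\Psi=1/x_z$ and hence $\theta$ remain finite) is a legitimate point of diligence that the paper passes over silently, but it does not change the argument.
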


\subsection{Particular case with a Robin-Neumann condition}
In this subsection we consider the Robin (Neumann) condition \eqref{convectiva} for $\varepsilon=1$ ($\varepsilon=0$) instead of \eqref{P1-4} for the functions $h(t)= \tfrac{h_0}{\sqrt{t}}$ and $v(t)= 2v_0\sqrt{t}$. We can state the following theorem regarding the  existence and uniqueness of the solution for the problem\\
\begin{subequations}\label{dirich_part*}
\begin{align}
&w_{zz}=w_t,\qquad &0<z<s(t),&\qquad t>0,\label{P*1-1}\\
&w_z(s(t),t)=-L_0\sqrt{t}s'(t),& &\qquad t>0,\label{P*1-2}\\
&w(s(t),t)=2w_{m_0}\sqrt{t},& &\qquad t>0,\label{P*1-3}\\
&w_z(0,t)=\frac{h_0}{\sqrt{t}}(\varepsilon w(0,t)-2v_0\sqrt{t}),& &\qquad t>0,\label{P*1-4}\\
&s(0)=0.\label{P*1-5}
\end{align}
\end{subequations}
\begin{thm}
There exists a unique similarity solution $\big( w^{\varepsilon}, s^{\varepsilon} \big)$ to the problem \eqref{dirich_part*} given by:
    \begin{align}
        &w^{\varepsilon}(z,t)= \tfrac{2\sqrt{t}(2h_0v_0-L_0\gamma^{\varepsilon})}{\sqrt{\pi}\erf(\gamma^{\varepsilon})+2h_0\varepsilon}\left[\exp(-\tfrac{z^2}{4t})+\tfrac{z\sqrt{\pi}}{2\sqrt{t}}\erf(\tfrac{z}{2\sqrt{t}})\right]-z\Big[\tfrac{2L_0\gamma^\varepsilon h_0\varepsilon + 2h_0v_0\sqrt{\pi}\erf(\gamma^\varepsilon)}{\sqrt{\pi}\erf(\gamma^\varepsilon)+2h_0\varepsilon}\Big],\quad t>0, \label{w_conv_solution}\\
        &s^{\varepsilon}(t)=2\gamma^{\varepsilon}\sqrt{t},\quad t>0,  \label{frontera_p_conv_solution}
    \end{align}
    where $\gamma^{\varepsilon}$ is the unique solution to
    $$(2v_0h_0-L_0\gamma)\exp(-\gamma^2) = (w_{m_0}+L_0\gamma^2)[\sqrt{\pi}\erf(\gamma)+2h_0\varepsilon], \qquad \gamma>0.$$
\end{thm}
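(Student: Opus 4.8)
The plan is to mirror the proof of the Dirichlet case, reducing the moving boundary problem \eqref{dirich_part*} to a one-parameter ordinary differential equation problem via the similarity substitution $T(\eta)=\tfrac{1}{2\sqrt{t}}w^{\varepsilon}(z,t)$, $\eta=\tfrac{z}{2\sqrt{t}}$, with free boundary $s^{\varepsilon}(t)=2\gamma\sqrt{t}$ for a parameter $\gamma>0$ to be determined. Exactly as before, \eqref{P*1-1} becomes $T''(\eta)+2\eta T'(\eta)-2T(\eta)=0$ on $(0,\gamma)$, the Stefan condition \eqref{P*1-2} becomes $T'(\gamma)=-L_0\gamma$, and \eqref{P*1-3} becomes $T(\gamma)=w_{m_0}$. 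The only new ingredient is the fixed-face condition: substituting $w^{\varepsilon}_z(0,t)=T'(0)$ and $w^{\varepsilon}(0,t)=2\sqrt{t}\,T(0)$ into \eqref{P*1-4} and cancelling $\sqrt{t}$ gives the Robin/Neumann relation $T'(0)=2h_0\bigl(\varepsilon T(0)-v_0\bigr)$, uniformly in $\varepsilon\in\{0,1\}$.

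Next I would use the general solution \eqref{T_sol}, $T(\eta)=A\bigl[\exp(-\eta^2)+\sqrt{\pi}\,\eta\,\erf(\eta)\bigr]+B\eta$, together with the elementary identities $T'(\eta)=A\sqrt{\pi}\,\erf(\eta)+B$, $T(0)=A$ and $T'(0)=B$. The condition at $z=0$ then reads $B=2h_0(\varepsilon A-v_0)$, while $T'(\gamma)=-L_0\gamma$ reads $A\sqrt{\pi}\,\erf(\gamma)+B=-L_0\gamma$. Solving this two-by-two linear system gives
\[
A=\frac{2h_0v_0-L_0\gamma}{\sqrt{\pi}\,\erf(\gamma)+2h_0\varepsilon},\qquad B=-\,\frac{2L_0\gamma h_0\varepsilon+2h_0v_0\sqrt{\pi}\,\erf(\gamma)}{\sqrt{\pi}\,\erf(\gamma)+2h_0\varepsilon},
\]
which, substituted into \eqref{T_sol} and undoing the scaling $w^{\varepsilon}=2\sqrt{t}\,T$, are precisely the coefficients displayed in \eqref{w_conv_solution}.

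It then remains to impose $T(\gamma)=w_{m_0}$. Inserting the above $A$ and $B$ into $A\bigl[\exp(-\gamma^2)+\sqrt{\pi}\gamma\erf(\gamma)\bigr]+B\gamma=w_{m_0}$ and clearing the common denominator, the two terms proportional to $\sqrt{\pi}\gamma\erf(\gamma)$ combine to $-L_0\gamma^2\sqrt{\pi}\erf(\gamma)$, and the equation collapses to
\[
(2v_0h_0-L_0\gamma)\exp(-\gamma^2)=(w_{m_0}+L_0\gamma^2)\bigl[\sqrt{\pi}\,\erf(\gamma)+2h_0\varepsilon\bigr],\qquad\gamma>0,
\]
as claimed. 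For existence and uniqueness of $\gamma^{\varepsilon}$ I would observe that the right-hand side is strictly positive on $(0,\infty)$, so (with $L_0>0$) any root lies in the bounded interval $\bigl(0,2v_0h_0/L_0\bigr)$. On that interval the left-hand side is a product of two positive, strictly decreasing functions, hence strictly decreasing from $2v_0h_0$ at $\gamma=0$ down to $0$ at $\gamma=2v_0h_0/L_0$; the right-hand side is a product of two positive, non-decreasing functions, at least one of which is strictly increasing, hence itself strictly increasing, with value $2h_0\varepsilon w_{m_0}$ at $\gamma=0$. Since $v_0>w_{m_0}\ge\varepsilon w_{m_0}$ forces $2v_0h_0>2h_0\varepsilon w_{m_0}$ at the left endpoint while the left side drops strictly below the positive right side at the right endpoint, the intermediate value theorem yields exactly one root $\gamma^{\varepsilon}$. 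Feeding $\gamma^{\varepsilon}$ back into the formulas produces $(w^{\varepsilon},s^{\varepsilon})$, and a sign check — or Lemma~\ref{signos} applied to the resulting $w^{\varepsilon}$ — confirms admissibility.

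The main obstacle is not conceptual but organizational: carrying out the algebraic cancellations cleanly enough that the transcendental equation emerges in the stated compact form, and correctly isolating the interval $(0,2v_0h_0/L_0)$ before running the monotonicity argument — the sign change of $2v_0h_0-L_0\gamma$ is exactly what prevents a naive "decreasing versus increasing on all of $(0,\infty)$" comparison from closing. The cases $\varepsilon=0$ and $\varepsilon=1$ are treated in a single stroke, $\varepsilon$ entering the uniqueness argument only through the value of the right-hand side at $\gamma=0$; the degenerate case $L_0=0$ (should it be admitted) is handled the same way with $(0,\infty)$ in place of the bounded interval.
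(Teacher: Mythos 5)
Your proposal is correct and follows essentially the same route as the paper: the same similarity reduction to the ODE problem, the same determination of the two constants from the Robin/Neumann and Stefan conditions (your $A,B$ are the paper's $C,D$), the same transcendental equation, and the same monotone-comparison argument for $G(\gamma)=(2v_0h_0-L_0\gamma)e^{-\gamma^2}$ versus $F(\gamma)=(w_{m_0}+L_0\gamma^2)[\sqrt{\pi}\erf(\gamma)+2h_0\varepsilon]$ on $(0,2v_0h_0/L_0)$. Your explicit remark that any root must lie in that bounded interval (because the right-hand side is positive) is slightly more careful than the paper's statement of the same facts, but it is not a different argument.
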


\begin{proof}
    The proof is similar to the one in the previous section. Under the similarity transformation \eqref{T-t}-\eqref{frontera_p}, we obtain the condition $T'(0) = 2h_0[\varepsilon T(0)-v_0]$ instead of \eqref{T-cond en 0}. Therefore we have the following ordinary differential problem
    \begin{subequations}
    \begin{align}
        &T''(\eta)+2\eta T'(\eta)-2T(\eta)=0,\qquad &0<\eta<\gamma,\label{T-ecdif*}\\
        &T'(\gamma)=-L_0\gamma,\label{T-ecdif-2*}\\
        &T(\gamma)=w_{m_0}, \label{T-ecdif-3*}\\
        &T'(0) = 2h_0[\varepsilon T(0)-v_0],\qquad t>0, \label{T_convectiva_part}
    \end{align}
    \end{subequations}
    whose solution is 
    \begin{align}
      T(\eta)=C\Big[ \exp(-\eta^2)+\sqrt{\pi} \eta \erf(\eta)\Big]+D\eta \label{T_sol_CD}  
    \end{align} where the constants $C$ and $D$ need to be determined.
    
    By \eqref{T_sol_CD} we have that $T(0) = C$ and $T'(0)= D$, that take us with \eqref{T_convectiva_part} to $D = 2h_0[\varepsilon C-v_0]$. Using \eqref{T-ecdif-2*} we obtain 
    $$C = \tfrac{2h_0v_0 - L_0\gamma}{\sqrt{\pi}\erf(\gamma)+2h_0\varepsilon} \qquad\textit{and}\qquad D =-\tfrac{2L_0\gamma h_0\varepsilon + 2h_0v_0\sqrt{\pi}\erf(\gamma)}{\sqrt{\pi}\erf(\gamma)+2h_0\varepsilon}.$$
    According to \eqref{T-ecdif-3*} we obtain for $\gamma$ the following equation
    $$\tfrac{2h_0v_0-L_0\gamma}{\sqrt{\pi}\erf(\gamma)+2h_0\varepsilon}[\exp(-\gamma^2)+\gamma\sqrt{\pi}\erf(\gamma)]-\gamma\left[\tfrac{2L_0\gamma h_0\varepsilon + 2h_0v_0\sqrt{\pi}\erf(\gamma)}{\sqrt{\pi}\erf(\gamma)+2h_0\varepsilon}\right] = w_{m_0}$$
    which is equivalent to
    \begin{align}
        & (2v_0h_0-L_0\gamma)\exp(-\gamma^2) = (w_{m_0}+L_0\gamma^2)[\sqrt{\pi}\erf(\gamma)+2h_0\varepsilon].\label{gamma_convec}
    \end{align}\\
    Assuming $G(\gamma) = (2v_0h_0-L_0\gamma)\exp(-\gamma^2)$ and $F(\gamma) = (w_{m_0}+L_0\gamma^2)[\sqrt{\pi}\erf(\gamma)+2h_0\varepsilon]$ we have
    $$G(0) = 2v_0h_0, \qquad G(\tfrac{2v_0h_0}{L_0}) = 0, \qquad G'(\gamma) < 0, \qquad  0<\gamma\leq \tfrac{2v_0h_0}{L_0},$$
    $$F(0) = 2w_{m_0}h_0\varepsilon, \qquad F(+\infty) = +\infty, \qquad F'(\gamma) > 0, \qquad \gamma > 0.$$\\
    From properties of functions $G$ and $F$ we obtain that if $v_0>\varepsilon w_{m_0}$, then there exists a unique $\gamma^\varepsilon$ such that \eqref{gamma_convec} holds.
    Then the solution to the problem \eqref{T-ecdif*}-\eqref{T_convectiva_part} is given by\\
    $$T(\eta)=\tfrac{2h_0v_0-L_0\gamma^\varepsilon}{\sqrt{\pi}\erf(\gamma^\varepsilon)+2h_0\varepsilon}\Big[ \exp(-\eta^2)+\eta\sqrt{\pi}\erf(\eta)\Big] -\eta\Big[\tfrac{2L_0\gamma^\varepsilon h_0\varepsilon + 2h_0v_0\sqrt{\pi}\erf(\gamma^\varepsilon)}{\sqrt{\pi}\erf(\gamma^\varepsilon)+2h_0\varepsilon}\Big],$$
    where $\gamma^\varepsilon$ is the unique solution to equation \eqref{gamma_convec}.\\
    Therefore, there exists a unique similarity solution to the problem \eqref{P*1-1}-\eqref{P*1-5} which is given by \eqref{w_conv_solution} and \eqref{frontera_p_conv_solution}.\\
\end{proof}

\begin{obs}
    
Assuming \eqref{p_cond} the problem ($P_4^*$) has a parametric explicit solution given by \eqref{sol_1}-\eqref{sol_3} where $w^\varepsilon(x,t)$ and $s^\varepsilon(t)$ are defined by \eqref{w_conv_solution} and \eqref{frontera_p_conv_solution}.

\end{obs}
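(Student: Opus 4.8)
The plan is to reproduce the self-similar reduction already carried out for the Dirichlet problem \eqref{dirich_part}, changing only the treatment of the fixed-face condition. The data \eqref{p_cond} have been chosen precisely so that $L$, $v$, $h$ and $w_m$ scale compatibly with the symmetry $z\mapsto\lambda z$, $t\mapsto\lambda^{2}t$ of the heat equation, so I would seek a solution of the form $w(z,t)=2\sqrt{t}\,T(\eta)$ with $\eta=z/(2\sqrt{t})$ and free boundary $s(t)=2\gamma\sqrt{t}$ for a constant $\gamma>0$ still to be found. Inserting the derivatives \eqref{wt-trans} into \eqref{P*1-1} reduces the PDE to the ordinary differential equation $T''(\eta)+2\eta T'(\eta)-2T(\eta)=0$ on $(0,\gamma)$, exactly as before.

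I would then translate the three remaining conditions of \eqref{dirich_part*} into the variable $\eta$: from \eqref{P*1-2} together with $s(t)=2\gamma\sqrt{t}$ one gets $T'(\gamma)=-L_{0}\gamma$; from \eqref{P*1-3}, $T(\gamma)=w_{m_{0}}$; and the Robin/Neumann condition \eqref{P*1-4}, after cancelling the common power of $\sqrt{t}$, becomes $T'(0)=2h_{0}\bigl(\varepsilon T(0)-v_{0}\bigr)$. Using the general solution $T(\eta)=C\bigl[\exp(-\eta^{2})+\sqrt{\pi}\,\eta\erf(\eta)\bigr]+D\eta$ of the ODE from \cite{SaTa2011-a}, a short computation gives $T(0)=C$, $T'(\eta)=C\sqrt{\pi}\erf(\eta)+D$ and hence $T'(0)=D$; the conditions at $\eta=0$ and $\eta=\gamma$ then form a linear system for $(C,D)$ whose solution is $C=\dfrac{2h_{0}v_{0}-L_{0}\gamma}{\sqrt{\pi}\erf(\gamma)+2h_{0}\varepsilon}$ and $D=-\dfrac{2L_{0}\gamma h_{0}\varepsilon+2h_{0}v_{0}\sqrt{\pi}\erf(\gamma)}{\sqrt{\pi}\erf(\gamma)+2h_{0}\varepsilon}$.

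Substituting these values of $C$ and $D$ into the one remaining condition $T(\gamma)=w_{m_{0}}$ and simplifying produces the transcendental equation $(2v_{0}h_{0}-L_{0}\gamma)\exp(-\gamma^{2})=(w_{m_{0}}+L_{0}\gamma^{2})\bigl[\sqrt{\pi}\erf(\gamma)+2h_{0}\varepsilon\bigr]$. To show it has exactly one positive root, I would set $G(\gamma)=(2v_{0}h_{0}-L_{0}\gamma)\exp(-\gamma^{2})$ and $F(\gamma)=(w_{m_{0}}+L_{0}\gamma^{2})\bigl[\sqrt{\pi}\erf(\gamma)+2h_{0}\varepsilon\bigr]$, note that $G<0<F$ for $\gamma>2v_{0}h_{0}/L_{0}$ so that any root must lie in $(0,2v_{0}h_{0}/L_{0}]$, and on that interval check $G(0)=2v_{0}h_{0}$, $G(2v_{0}h_{0}/L_{0})=0$, $G'(\gamma)=-e^{-\gamma^{2}}\bigl[L_{0}+2\gamma(2v_{0}h_{0}-L_{0}\gamma)\bigr]<0$, while $F(0)=2w_{m_{0}}h_{0}\varepsilon$ and $F'(\gamma)>0$. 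Since the standing hypothesis $v_{0}>w_{m_{0}}$ gives $v_{0}>\varepsilon w_{m_{0}}$ for both $\varepsilon=0$ and $\varepsilon=1$, we have $G(0)>F(0)$, so $G-F$ is strictly decreasing on $[0,2v_{0}h_{0}/L_{0}]$ from a positive value to a negative value and therefore vanishes at a unique $\gamma^{\varepsilon}$. Undoing the change of variables, $w^{\varepsilon}(z,t)=2\sqrt{t}\,T(z/(2\sqrt{t}))$ with these $C$, $D$ and $\gamma^{\varepsilon}$ is precisely \eqref{w_conv_solution}--\eqref{frontera_p_conv_solution}.

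All the manipulations are elementary and essentially identical to those in the Dirichlet case; the only genuinely delicate point is the monotonicity argument for $G-F$ — confirming that $G'<0$ on the whole interval $[0,2v_{0}h_{0}/L_{0}]$ where the root is forced to lie, and pinning down the sign condition $G(0)>F(0)$, which is exactly where $v_{0}>w_{m_{0}}$ enters. For completeness one should also note that the resulting $w^{\varepsilon}$ is smooth on $D$ with $w^{\varepsilon}>0$ and $w^{\varepsilon}_{z}\le 0$, consistent with Lemma \ref{signos}, so that Theorem \ref{sol.1} indeed yields a well-defined parametric solution of $(P_{4}^{*})$.
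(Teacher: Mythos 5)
Your proposal is correct and follows essentially the same route as the paper: the remark is an immediate consequence of the preceding existence--uniqueness theorem for the similarity solution of \eqref{dirich_part*} (whose proof you reproduce faithfully, including the reduction to $T''+2\eta T'-2T=0$, the determination of $C$, $D$ and the transcendental equation \eqref{gamma_convec} with its unique positive root $\gamma^{\varepsilon}$) combined with Theorem \ref{sol.1}. Your added care in confining the root to $(0,2v_0h_0/L_0]$ before checking monotonicity, and the sign check against Lemma \ref{signos}, are consistent refinements rather than a different argument.
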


\begin{obs}
According to Theorem \ref{epsilon-0}, for the particular case given in this subsection it is easy to check that, for $\varepsilon = 1$ we have that the solution of \eqref{P*1-1}-\eqref{P*1-5} given by \eqref{w_conv_solution} and \eqref{frontera_p_conv_solution} converges to the solution of \eqref{P´1-1}-\eqref{P´1-5} given  by \eqref{w_solution}-\eqref{frontera_p_solution} when the coefficient $h_0 \to +\infty$.
\end{obs}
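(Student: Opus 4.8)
The plan is to verify the convergence claimed in the last Remark by direct substitution into the explicit formulas, checking the limit $h_0\to+\infty$ in the equation defining $\gamma^{\varepsilon}$ and then in the closed forms for $w^{\varepsilon}$ and $s^{\varepsilon}$. First I would take $\varepsilon=1$ in the transcendental equation \eqref{gamma_convec}, namely
\[
(2v_0h_0-L_0\gamma)\exp(-\gamma^2) = (w_{m_0}+L_0\gamma^2)\big[\sqrt{\pi}\erf(\gamma)+2h_0\big],
\]
and divide both sides by $h_0$. As $h_0\to+\infty$ the term $L_0\gamma/h_0\to 0$ and $\sqrt{\pi}\erf(\gamma)/h_0\to 0$, so the equation degenerates to
\[
2v_0\exp(-\gamma^2) = 2\big(w_{m_0}+L_0\gamma^2\big),
\]
i.e. $v_0\exp(-\gamma^2)=L_0\gamma^2+w_{m_0}$, which is exactly equation \eqref{gamma} defining $\gamma^*$ for the Dirichlet problem. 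To make this rigorous I would invoke the monotonicity properties already established for $G$ and $F$ in the two existence theorems: for each fixed $h_0$ the root $\gamma^{1}=\gamma^{1}(h_0)$ is unique, the roots stay in a bounded interval (since the left-hand side is bounded and the right-hand side blows up with $\gamma$), and a standard continuity/Dini-type argument for roots of a continuously-parametrised family of strictly monotone functions gives $\gamma^{1}(h_0)\to\gamma^{*}$.

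Next I would pass to the limit in the solution itself. In \eqref{w_conv_solution} with $\varepsilon=1$, write the coefficients as
\[
C(h_0)=\frac{2h_0v_0-L_0\gamma^{1}}{\sqrt{\pi}\erf(\gamma^{1})+2h_0},
\qquad
D(h_0)=-\frac{2L_0\gamma^{1}h_0+2h_0v_0\sqrt{\pi}\erf(\gamma^{1})}{\sqrt{\pi}\erf(\gamma^{1})+2h_0}.
\]
Dividing numerator and denominator by $h_0$ and using $\gamma^{1}(h_0)\to\gamma^{*}$ gives $C(h_0)\to v_0$ and $D(h_0)\to -L_0\gamma^{*}-v_0\sqrt{\pi}\erf(\gamma^{*})$, which are precisely the constants $A$ and $B$ in the Dirichlet solution \eqref{w_solution}. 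Hence $w^{1}(z,t)\to w(z,t)$ uniformly on compact subsets of $\{0\le z\le s^{*}(t),\ t>0\}$ and, since $s^{\varepsilon}(t)=2\gamma^{\varepsilon}\sqrt{t}$, also $s^{1}(t)\to s^{*}(t)=2\gamma^{*}\sqrt{t}$. This is the content of Theorem \ref{epsilon-0} made explicit: the convergence $(w^{1},s^{1})\to(w,s)$ is what the abstract result predicts, and here it is realised termwise in the closed-form expressions; one could additionally note that the convective boundary condition \eqref{P*1-4} reduces formally to the Dirichlet condition \eqref{P´1-4} in the limit, consistent with the derivation.

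The main obstacle — really the only non-bookkeeping point — is justifying $\gamma^{1}(h_0)\to\gamma^{*}$ rather than merely observing that the limiting equation has the right form. I expect to handle it by a compactness-plus-uniqueness argument: any subsequential limit $\bar\gamma$ of $\{\gamma^{1}(h_0)\}$ satisfies the limiting equation \eqref{gamma}, which has the unique root $\gamma^{*}$ by the $G$/$F$ monotonicity already proved, so the whole family converges to $\gamma^{*}$; boundedness of the family follows because for large $\gamma$ the right-hand side of \eqref{gamma_convec} exceeds the left irrespective of $h_0$, and positivity is built into the setup. Everything else is elementary algebra with the explicit formulas, so the remark follows without difficulty once this convergence of the transcendental root is in place.
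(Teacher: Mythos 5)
Your proposal is correct and is exactly the direct verification the paper intends: the paper offers no written proof for this remark (it merely asserts the check is easy via Theorem \ref{epsilon-0}), and the natural check is precisely your passage to the limit in the transcendental equation \eqref{gamma_convec} and in the coefficients $C(h_0)$, $D(h_0)$, recovering \eqref{gamma}, $A=v_0$ and $B=-L_0\gamma^*-v_0\sqrt{\pi}\erf(\gamma^*)$. Your compactness-plus-uniqueness argument for $\gamma^{1}(h_0)\to\gamma^{*}$ (with the uniform bound $\exp(-\gamma^2)\ge w_{m_0}/v_0$ at the root) supplies the one nontrivial step the paper glosses over, so your write-up is, if anything, more complete than the source.
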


\section{Conclusions}
In this work, we investigated a class of moving boundary problems associated with a nonlinear evolution equation featuring an exponential source term. Through the application of reciprocal transformations and the Cole-Hopf transformation, we established a significant connection to Stefan-type problems, thereby enhancing our understanding of the underlying dynamics involved in these systems.

Our results reveal that the boundary conditions at the fixed face play a crucial role in solution formation. By deriving explicit similarity solutions in parametric form for specific cases, we have provided a theoretical framework that not only describes the behavior of the solutions but also facilitates the identification of patterns and trends in complex dynamic systems.

This innovative approach not only contributes to the theory of moving boundary problems but also offers practical insights for applications across various fields of science and engineering. Future research could delve deeper into generalizing our solutions and exploring additional effects, such as the influence of external perturbations and non-homogeneous initial conditions.

\begin{appendices}
\section*{Appendix: A canonical solitonic reciprocal reduction}

 Here, a 3rd order nonlinear evolution equation with exponential source term is introduced, namely
 \begin{equation}\tag{A1}\label{a1}
     \Psi_t=\frac{1}{2}\left[ \left( \Psi^{-2}\right)_{xxx}-\left(\Psi^{-2} \right)_x \right] +\alpha \exp(mx)+\beta \exp(-mx).
 \end{equation}
 Under the reciprocal transformation
 \begin{equation}\tag{A2}\label{a2}
     dy=\Psi \; dx+ \left( \frac{1}{2}\left[ \left( \Psi^{-2}\right)_{xx}-\Psi^{-2} \right]+ \frac{\alpha}{m}\exp(mx)-\frac{\beta}{m}\exp(-mx) \right) \; dt, \qquad (m\neq 0)
     \end{equation}
 it is seen that, with $\rho=\Psi^{-1}$
 \begin{equation}\tag{A3}
     dx=\rho \; dy- \rho \left( \frac{1}{2}\left[ \left( \Psi^{-2}\right)_{xx}-\Psi^{-2} \right]+ \frac{\alpha}{m}\exp(mx)-\frac{\beta}{m}\exp(-mx) \right) \; dt.
 \end{equation}
 Thus,
 \begin{equation}\tag{A4}
     x_y=\rho, \qquad x_t=-\rho_{yy}+\frac{1}{2} \rho^3-\rho \left( \frac{\alpha}{m}\exp(mx)-\frac{\beta}{m}\exp(-mx) \right), 
 \end{equation}
 so that, with $t\to -t$
 \begin{equation}\tag{A5}
     x_t=x_{yyy}-\frac{1}{2}x_y^3+x_y \left( \frac{\alpha}{m}\exp(mx)-\frac{\beta}{m}\exp(-mx)\right).
 \end{equation}
 The latter with $m=2$, $\alpha=-\beta=3$ constitutes the canonical $m^2$KdV (modofied modified Korteweg-de Vries equation) equation \cite{Fo1980,CaDe1985}. This, in turn, is reciprocally related to the solitonic torsion evolution equation as derived in \cite{ScRo1999} in the geometric context of binormal motion of an inextensible curve.

 It is remarked that, in the specialization $\alpha=\beta=0$ in \eqref{a1}, reduction via \eqref{a2} is made to the canonical mKdV equation in $v=x_y$. In recent work  \cite{Ro2023}, moving boundary problems of Stefan-type for both the mKdV equation and a reciprocally related base  Casimir member of a compacton hierarchy, namely \eqref{a1} with $\alpha=\beta=0$, \cite{OlRo1996}, have been shown to admit exact solution via Painlevé II reduction.
\end{appendices}

\section*{Acknowledgement}
The study was sponsored by the projects  O06-24CI1901 and  O06-24CI1903  from Austral University, Rosario.

\end{document}